\documentclass[11pt]{amsart}
\usepackage[margin=30mm]{geometry}
\usepackage{amsmath,amssymb}
\usepackage{amsthm}
\usepackage{mathrsfs}

\newtheorem{thm}{Theorem}[section]

\newtheorem{lem}{Lemma}[section]
\newtheorem{cor}{Corollary}[section]
\newtheorem{defi}{Definition}[section]

\newtheorem{rem}{Remark}[section]

\newtheorem*{claim}{\it{Claim}}
\newtheorem*{theorem}{\it{Theorem}}

\usepackage{enumitem}

\begin{document}

\title{Some results on s-limit shadowing and Li--Yorke type chaos}
\author{Noriaki Kawaguchi}
\subjclass[2020]{37B05; 37B35; 37B65; 37D45}
\keywords{s-limit shadowing; Li--Yorke chaos; Furstenberg family}
\address{Department of Mathematical and Computing Science, School of Computing, Tokyo Institute of Technology, 2-12-1 Ookayama, Meguro-ku, Tokyo 152-8552 Japan}
\email{gknoriaki@gmail.com}

\begin{abstract}
In \cite{K4}, for any continuous self-map of a compact metric space with s-limit shadowing, by using a $G_\delta$-partition of the phase space, a global description of Li--Yorke type chaos (with respect  to several particular Furstenberg families) is obtained. In this paper, we complement the results in \cite{K4} by some results concerning the s-limit shadowing and general Furstenberg families. An example is also given to illustrate the results.
\end{abstract}

\maketitle

\markboth{NORIAKI KAWAGUCHI}{Some results on s-limit shadowing and Li--Yorke type chaos}

\section{Introduction}

{\em Chaos} is a central concept in the modern theory of dynamical systems. The notion of {\em Li--Yorke chaos} was introduced in \cite{LY2}. Let $X$ denote a compact metric space endowed with a metric $d$. For a continuous map $f\colon X\to X$, $(x,y)\in X^2$ is said to be a {\em scrambled pair} for $f$ if it is a proximal and not asymptotic pair for $f$, i.e.,
\[
\liminf_{i\to\infty}d(f^i(x),f^i(y))=0\quad\text{and}\quad\limsup_{i\to\infty}d(f^i(x),f^i(y))>0.
\]
A subset $S$ of $X$ is said to be a {\em scrambled set} for $f$ if every $(x,y)\in S^2$ with $x\ne y$ is a scrambled pair for $f$. We say that $f$ exhibits {\em Li--Yorke chaos} if there is an uncountable scrambled set for $f$.

{\em Chain components} are important objects for global understanding of dynamical systems \cite{C}. Given any continuous map $f\colon X\to X$, let $\mathcal{C}(f)$ denote the set of chain components for $f$. For every $x\in X$, we have
\[
\lim_{i\to\infty}d(f^i(x),C(x))=0
\]
for some $C(x)\in\mathcal{C}(f)$. For any $x,y\in X$, if $C(x)\ne C(y)$, then $(x,y)$ is a {\em distal} pair for $f$, i.e.,
\[
\liminf_{i\to\infty}d(f^i(x),f^i(y))>0,
\]
thus, every proximal pair $(x,y)\in X^2$ for $f$ satisfies $C(x)=C(y)$. This implies that every scrambled set $S$ for $f$ is contained in the {\em basin}
\[
W^s(C)=\{x\in X\colon\lim_{i\to\infty}d(f^i(x),C)=0\}
\]
of some $C\in C(f)$. As a consequence, Li--Yorke chaos is a phenomenon observed in the basin of each chain component.

This viewpoint can be made more precise by using a {\em chain proximal relation} introduced in \cite{RW,Shi}. Given any continuous map $f\in X\to X$, every
$C\in\mathcal{C}(f)$ admits a partition $\mathcal{D}(C)$ with respect to the chain proximal relation $\sim_C$:
\[
C=\bigsqcup_{D\in\mathcal{D}(C)}D. 
\]
In \cite{K4}, by considering a kind of stable set $V^s(D)$ for each $D\in\mathcal{D}(C)$, the author extended it to a partition of $W^s(C)$:
\[
W^s(C)=\bigsqcup_{D\in\mathcal{D}(C)}V^s(D),
\]
where $V^s(D)$, $D\in\mathcal{D}(C)$, are $G_\delta$-subsets of $X$. We obtain accordingly a partition of $X$:
\[
X=\bigsqcup_{C\in\mathcal{C}(f)}W^s(C)=\bigsqcup_{C\in\mathcal{C}(f)}\bigsqcup_{D\in\mathcal{D}(C)}V^s(D).
\]
Since every scrambled set $S$ for $f$ is contained in $V^s(D)$ for some $C\in\mathcal{C}(f)$ and $D\in\mathcal{D}(C)$, as in \cite{K4}, this partition can be used to give a global description of Li--Yorke (more generally, Li--Yorke type) chaos.

The notion of Li--Yorke chaos was generalized in \cite{X} for $n$-tuples, $n\ge2$, and was also generalized in \cite{TX, XLT} with respect to {\em Furstenberg families} (see also \cite{LY1} and references therein). We call them as {\em Li--Yorke type chaos}. Let
\[
\mathbb{N}_0=\{0\}\cup\mathbb{N}=\{0,1,2,\dots\}
\]
and let $\mathcal{F}\subset2^{\mathbb{N}_0}$. We say that
$\mathcal{F}$ is a {\em Furstenberg family} if the following conditions are satisfied
\begin{itemize}
\item (hereditary upward) For any $A,B\subset\mathbb{N}_0$, $A\in\mathcal{F}$ and $A\subset B$ implies $B\in\mathcal{F}$,
\item (proper) $\mathcal{F}\ne\emptyset$ and $\mathcal{F}\ne2^{\mathbb{N}_0}$.
\end{itemize}
A Furstenberg family $\mathcal{F}$ is said to be {\em full} if
\[
\{i\in A\colon i\ge n\}\in\mathcal{F}
\]
for all $A\in\mathcal{F}$ and $n\ge0$. We say that a Furstenberg family $\mathcal{F}$ is {\em translation invariant} if
 \[
\{i+n\colon i\in A\}\in\mathcal{F}
\]
and
\[
\{i\in\mathbb{N}_0\colon i+n\in A\}\in\mathcal{F}
\]
for all $A\in\mathcal{F}$ and $n\ge0$.

Let $f\colon X\to X$ be a continuous map and let $\mathcal{F},\mathcal{G}$ be Furstenberg families. For any $x_1,x_2,\dots, x_n\in X$, $n\ge2$, and $r>0$, let
\[
S_f(x_1,x_2,\dots,x_n;r)=\{i\in\mathbb{N}_0\colon\min_{1\le j<k\le n}d(f^i(x_j),f^i(x_k))>r\}
\] 
and
\[
T_f(x_1,x_2,\dots,x_n;r)=\{i\in\mathbb{N}_0\colon\max_{1\le j<k\le n}d(f^i(x_j),f^i(x_k))<r\}.
\]
For any $\delta>0$, we say that $(x_1,x_2,\dots,x_n)$ is an {\em $(\mathcal{F},\mathcal{G})$-$\delta$-scrambled} $n$-tuple for $f$ if
\[
S_f(x_1,x_2,\dots,x_n;\delta)\in\mathcal{F},
\]
and
\[
T_f(x_1,x_2,\dots,x_n;\epsilon)\in\mathcal{G}
\]
for all $\epsilon>0$. Let $Y$ be a non-empty subset $X$. For any $n\ge2$ and $\delta>0$, we say that $Y$ is {\em dense $(\mathcal{F},\mathcal{G})$-$n$-$\delta$-chaotic} (resp.\:{\em generic $(\mathcal{F},\mathcal{G})$-$n$-$\delta$-chaotic}) for $f$ if the set of $(\mathcal{F},\mathcal{G})$-$\delta$-scrambled $n$-tuples in $Y^n$ for $f$, i.e.,
\begin{align*}
&\{(x_1,x_2,\dots,x_n)\in Y^n\colon S_f(x_1,x_2,\dots,x_n;\delta)\in\mathcal{F}\}\\
&\cap\bigcap_{\epsilon>0}\{(x_1,x_2,\dots,x_n)\in Y^n\colon T_f(x_1,x_2,\dots,x_n;\epsilon)\in\mathcal{G}\}
\end{align*}
is a dense (resp.\:residual) subset of $Y^n$.

\begin{rem}
\normalfont
For any topological space $Z$, a subset $S$ of $Z$ is said to be {\em residual} if $S$ contains a countable intersection of dense open subsets of $Z$. If $Z$ is a complete metric space, then by Baire Category Theorem, any residual subset $S$ of $Z$ is dense in $Z$. For any topological space $Z$, a subset $S$ of $Z$ is said to be a {\em $G_\delta$-subset} of $Z$ if $S$ is a countable intersection of open subsets of $Z$. By, e.g., Theorem 24.12 of \cite{W}, we know that a subspace $S$ of a complete metric space $Z$ is completely metrizable if and only if $S$ is a $G_\delta$-subset of $Z$.
\end{rem}

\begin{rem}
\normalfont
If $Y$ is dense $(\mathcal{F},\mathcal{G})$-$n$-$\delta$-chaotic for $f$, and if
\begin{align*}
&\{(x_1,x_2,\dots,x_n)\in Y^n\colon S_f(x_1,x_2,\dots,x_n;\delta)\in\mathcal{F}\}\\
&\cap\bigcap_{\epsilon>0}\{(x_1,x_2,\dots,x_n)\in Y^n\colon T_f(x_1,x_2,\dots,x_n;\epsilon)\in\mathcal{G}\}
\end{align*}
is a $G_\delta$-subset of $Y^n$, then $Y$ is generic $(\mathcal{F},\mathcal{G})$-$n$-$\delta$-chaotic for $f$. By Remark 1.1, if $Y$ is a $G_\delta$-subset of $X$ and generic $(\mathcal{F},\mathcal{G})$-$n$-$\delta$-chaotic for $f$, then $Y$ is dense $(\mathcal{F},\mathcal{G})$-$n$-$\delta$-chaotic for $f$.
\end{rem}

We recall a simplified version of a theorem of Mycielski \cite{M}. A topological space $Z$ is said to be {\em perfect} if $Z$ has no isolated point. For any complete metric space $Z$, a subset $S$ of $Z$ is said to be a {\em Mycielski set} if $S$ is a union of countably many Cantor sets (see \cite{BGKM}). Note that for any Mycielski set $S$ in $Z$ and an open subset $U$ of $Z$ with $S\cap U\ne\emptyset$, $S\cap U$ is an uncountable set.  

\begin{theorem}[Mycielski]
Let $Z$ be a perfect complete separable metric space. Given any $n\ge2$ and a residual subset $R_n$ of $Z^n$, there is a Mycielski set $S$ which is dense in $Z$ and satisfies $(x_1,x_2,\dots,x_n)\in R_n$ for all distinct $x_1,x_2,\dots,x_n\in S$. Moreover, if $R_n$ is a residual subset of $Z^n$ for each $n\ge2$, then there is a Mycielski set $S$ which is dense in $Z$ and satisfies $(x_1,x_2,\dots,x_n)\in R_n$ for all $n\ge2$ and distinct $x_1,x_2,\dots,x_n\in S$.
\end{theorem}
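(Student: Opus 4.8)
The plan is to prove this by the classical Kuratowski--Mycielski construction of a Cantor scheme, reducing everything to the density of the open sets underlying the residual sets $R_n$. First I would replace each $R_n$ by a decreasing sequence of dense open subsets: write $R_n\supseteq\bigcap_{m\ge1}G_{n,m}$ with each $G_{n,m}$ open and dense in $Z^n$, and intersect with the open dense set $\{(x_1,\dots,x_n): x_i\ \text{pairwise distinct}\}$ (open and dense because $n\ge2$ and $Z$ is perfect). Thus it suffices to produce a dense Mycielski set $S$ such that every injective $n$-tuple of points of $S$ lies in every $G_{n,m}$. The only property of $G_{n,m}$ I would use is: given any nonempty open box $V_1\times\cdots\times V_n$, since $G_{n,m}$ is open and dense one can find a smaller nonempty open box $V_1'\times\cdots\times V_n'$, with each $V_i'$ of arbitrarily small diameter, whose closure is contained in $G_{n,m}\cap(V_1\times\cdots\times V_n)$.

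Next I would set up the combinatorial skeleton. Fix a countable base $\{B_j\}_{j\ge1}$ of nonempty open subsets of $Z$. For each $j$ I grow a binary Cantor scheme: nonempty open sets $U^{(j)}_s$ indexed by finite binary strings $s$, with root $U^{(j)}_\varnothing\subset B_j$, with $\overline{U^{(j)}_{s0}}$ and $\overline{U^{(j)}_{s1}}$ disjoint subsets of $U^{(j)}_s$, and with $\operatorname{diam}U^{(j)}_s\to0$ as $|s|\to\infty$. Because $Z$ is perfect, every nonempty open set contains two points and hence can be split into two disjoint small open pieces, so such schemes exist. Each scheme determines a Cantor set $K_j\subset B_j$ (via completeness and vanishing diameters), and $S:=\bigcup_j K_j$ is a Mycielski set meeting every $B_j$, hence dense. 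The decisive point is that a single Cantor set cannot be dense (it is compact, hence closed), so the union over a base is forced; the difficulty this creates is that an injective tuple of points of $S$ may draw its coordinates from several different $K_j$, so the membership conditions must be arranged across the whole family at once.

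To handle this I would build all the schemes simultaneously by a diagonal fusion. I process the construction in stages $k=1,2,\dots$; by stage $k$ only the schemes $K_1,\dots,K_k$ have been activated, each grown to some common depth, and the current frontier is the finite set $F_k$ of all strings of that depth in these $k$ schemes. At stage $k$ I refine the frontier open sets so that, for every $n$ with $2\le n\le k$ and every injective $n$-tuple $(s_1,\dots,s_n)$ of distinct frontier nodes, the closed product box $\overline{U_{s_1}}\times\cdots\times\overline{U_{s_n}}$ is contained in $G_{n,m}$ for all $m\le k$. This is a finite list of constraints, and they can be imposed one after another using the box-refinement property above; crucially, since the $G_{n,m}$ are open, shrinking some open sets to satisfy a later constraint never destroys an already satisfied one, because sub-boxes of a box inside an open set remain inside it. I must also keep the scheme-defining properties (nesting, disjoint closures, vanishing diameters) intact, which only requires that the refinements be nonempty and small, again available by perfectness.

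Finally I would verify the conclusion. Let $x_1,\dots,x_n\in S$ be distinct; each $x_i$ is the point of some scheme $K_{j_i}$ determined by a branch, and distinct points correspond to distinct (scheme, branch) data. Fix $m$. For all large $k\ge\max(n,m)$ the points $x_1,\dots,x_n$ lie in pairwise distinct frontier nodes $s_1,\dots,s_n\in F_k$, so by the stage-$k$ refinement $(x_1,\dots,x_n)\in\overline{U_{s_1}}\times\cdots\times\overline{U_{s_n}}\subset G_{n,m}$. Since $m$ is arbitrary this gives $(x_1,\dots,x_n)\in\bigcap_m G_{n,m}\subset R_n$. For a single prescribed $n$ one runs the same construction imposing only the constraints for that $n$; for the ``moreover'' statement the diagonalization over all $n\le k$ already delivers the claim for every $n$ simultaneously. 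The main obstacle, as noted, is organizing density together with control of mixed tuples, and it is exactly the staged fusion over a countable base that resolves it.
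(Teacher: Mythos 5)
Your proposal is a correct proof, and it is the standard Kuratowski--Mycielski scheme construction. Note, however, that the paper itself gives no proof of this statement: it is quoted as a simplified version of Mycielski's theorem with a citation to \cite{M}, so there is no internal argument to compare yours against; what you have written is a legitimate self-contained proof of the stated version. The two delicate points in your outline both check out. First, the monotonicity observation that makes the staged fusion work is exactly right: a constraint of the form $\overline{U_{s_1}}\times\cdots\times\overline{U_{s_n}}\subset G_{n,m}$ is preserved when any of the $U_{s_i}$ is later replaced by a nonempty open subset, so the finitely many constraints at stage $k$ can be imposed sequentially without interference, and the scheme properties (nesting, disjoint closures, vanishing diameters) can be restored afterwards using perfectness. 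Second, the verification step is sound: for distinct $x_1,\dots,x_n\in S$, assign each $x_i$ to a scheme containing it; nodes from different schemes are distinct by definition of the index set, and distinct points assigned to the same scheme lie on distinct branches whose initial segments eventually differ, so for all large $k$ the points occupy pairwise distinct frontier nodes and the ordered (hence orientation-free, since you impose the condition for every injective ordered tuple of nodes) stage-$k$ constraints give $(x_1,\dots,x_n)\in G_{n,m}$ for all $m\le k$. One small remark: intersecting with the open dense set of injective tuples is harmless but not needed, since the conclusion only concerns tuples of distinct points, and membership in $\bigcap_m G_{n,m}\subset R_n$ is all that is required.
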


By this theorem, if $Y$ is a $G_\delta$-subset of $X$ and generic $(\mathcal{F},\mathcal{G})$-$n$-$\delta$-chaotic for $f$, then there is a dense Mycielski set $S$ in $Y$ such that 
$(x_1,x_2,\dots,x_n)$ is $(\mathcal{F},\mathcal{G})$-$\delta$-scrambled for $f$ for all distinct $x_1,x_2,\dots,x_n\in S$. If $Y$ is perfect, this condition in turn implies that $Y$ is dense $(\mathcal{F},\mathcal{G})$-$n$-$\delta$-chaotic for $f$.

The partition
\[
X=\bigsqcup_{C\in\mathcal{C}(f)}W^s(C)=\bigsqcup_{C\in\mathcal{C}(f)}\bigsqcup_{D\in\mathcal{D}(C)}V^s(D)
\]
mentioned above coarsely describes a global structure of {\em pseudo-orbits} which is compatible with the definition of Li--Yorke type chaos. By combining it with the so-called {\em shadowing}, we can obtain a precise description of the true orbit structure. Following \cite{K4}, we recall the definition of {\em s-limit shadowing}. 

\begin{defi}
\normalfont
Let $f\colon X\to X$ be a continuous map and let $\xi=(x_i)_{i\ge0}$ be a sequence of points in $X$. For $\delta>0$, $\xi$ is called a {\em $\delta$-limit-pseudo orbit} of $f$ if $d(f(x_i),x_{i+1})\le\delta$ for all $i\ge0$, and
\[
\lim_{i\to\infty}d(f(x_i),x_{i+1})=0.
\]
For $\epsilon>0$, $\xi$ is said to be {\em $\epsilon$-limit shadowed} by $x\in X$ if $d(f^i(x),x_i)\leq \epsilon$ for all $i\ge 0$, and
\[
\lim_{i\to\infty}d(f^i(x),x_i)=0.
\]
We say that $f$ has the {\em s-limit shadowing property} if for any $\epsilon>0$, there is $\delta>0$ such that every $\delta$-limit-pseudo orbit of $f$ is $\epsilon$-limit shadowed by some point of $X$.
\end{defi}

\begin{rem}
\normalfont
Let $f\colon X\to X$ be a continuous map and let $\xi=(x_i)_{i\ge0}$ be a sequence of points in $X$.

\begin{itemize}
\item For $\delta>0$, $\xi$ is called a {\em $\delta$-pseudo orbit} of $f$ if $d(f(x_i),x_{i+1})\le\delta$ for all $i\ge0$. For $\epsilon>0$, $\xi$ is said to be {\em $\epsilon$-shadowed} by $x\in X$ if $d(f^i(x),x_i)\leq \epsilon$ for all $i\ge 0$. We say that $f$ has the {\em shadowing property} if for any $\epsilon>0$, there is $\delta>0$ such that every $\delta$-pseudo orbit of $f$ is $\epsilon$-shadowed by some point of $X$.
\item $\xi$ is called a {\em limit-pseudo orbit} of $f$ if
\[
\lim_{i\to\infty}d(f(x_i),x_{i+1})=0,
\]
and said to be {\em limit shadowed} by $x\in X$
if
\[
\lim_{i\to\infty}d(f^i(x),x_i)=0.
\]
We say that $f$ has the {\em limit shadowing property} if every limit-pseudo orbit of $f$ is limit shadowed by some point of $X$.
\end{itemize}

It is not difficult to show that if $f$ has the s-limit shadowing property, then $f$ satisfies the shadowing property. It is also known that if $f$ has the s-limit shadowing property, then $f$ satisfies the limit shadowing property (see \cite{BGO}).  
\end{rem}

\begin{rem}
\normalfont
As far as the author knows, the name ``s-limit shadowing'' was introduced by Sakai \cite{Sa}, but it was implicitly considered by Bowen \cite{B}. In \cite{MO}, it is proved that the s-limit shadowing is dense in the space of all continuous self-maps of a compact topological manifold possibly with boundary. In a recent paper \cite{BCOT}, it is shown that the s-limit shadowing is generic in the space of all continuous circle maps.
\end{rem}

In \cite{K4}, the author considered Li--Yorke type chaos corresponding to several particular Furstenberg families. By assuming the s-limit shadowing and using the partition
\[
X=\bigsqcup_{C\in\mathcal{C}(f)}W^s(C)=\bigsqcup_{C\in\mathcal{C}(f)}\bigsqcup_{D\in\mathcal{D}(C)}V^s(D),
\]
a global description of Li--Yorke type chaos was obtained. We define the partition in Section 2. The purpose of this paper is to complement the results of \cite{K4} in a more general setting. The main results of this paper, which are presented below, concern the s-limit shadowing and general Furstenberg families. 

First, we recall a lemma from Section 4 of \cite{K4}.

\begin{lem}
Let $f\colon X\to X$ be a continuous map. Let $C\in\mathcal{C}(f)$ and $D\in\mathcal{D}(C)$. If $f$ has the s-limit shadowing property, then for any $x,y\in V^s(D)$ and $\epsilon>0$, there is $z\in V^s(D)$ such that $d(y,z)\le\epsilon$ and
\[
\lim_{i\to\infty}d(f^i(x),f^i(z))=0.
\]
\end{lem}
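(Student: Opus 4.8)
The plan is to realize $z$ as a point that $\epsilon$-limit shadows a carefully designed $\delta$-limit-pseudo orbit $\xi=(x_i)_{i\ge0}$ with $x_0=y$ whose tail coincides with the genuine orbit of $x$. Given $\epsilon>0$ I would first invoke the s-limit shadowing property to obtain $\delta>0$ such that every $\delta$-limit-pseudo orbit is $\epsilon$-limit shadowed by some point. If I can build a $\delta$-limit-pseudo orbit $\xi$ with $x_0=y$ and with $x_i=f^i(x)$ for all $i\ge M$ (for some $M$), then any point $z$ that $\epsilon$-limit shadows $\xi$ does the job: from $d(f^i(z),x_i)\le\epsilon$ evaluated at $i=0$ I get $d(z,y)\le\epsilon$, and from $\lim_{i\to\infty}d(f^i(z),x_i)=0$ together with $x_i=f^i(x)$ for $i\ge M$ I get $\lim_{i\to\infty}d(f^i(z),f^i(x))=0$. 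Finally, since $z$ is then asymptotic to $x\in V^s(D)$ and $V^s(D)$ is invariant under the asymptotic relation (asymptotic points are chain proximal and converge to the same $C$, hence lie in the same $V^s(D)$, as in the construction of the partition in Section 2), I obtain $z\in V^s(D)$.

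It remains to construct $\xi$. I would let it follow the true orbit of $y$ up to a large time $m$, namely $x_i=f^i(y)$ for $0\le i\le m$; then insert a finite bridge $x_{m+1},\dots,x_{M-1}$; and then set $x_i=f^i(x)$ for all $i\ge M$. The only errors $d(f(x_i),x_{i+1})$ that can be nonzero are those occurring inside the bridge, since on the $y$-segment and on the $x$-tail consecutive points are exact images under $f$. Thus, provided the bridge is chosen to be a $\delta$-chain, $\xi$ is automatically a $\delta$-pseudo orbit, and because there are only finitely many nonzero errors we also have $\lim_{i\to\infty}d(f(x_i),x_{i+1})=0$; hence $\xi$ is a genuine $\delta$-limit-pseudo orbit.

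The crux is the bridge. To make the tail equal to the genuine orbit $f^i(x)$ (and not merely a time-shifted copy of it, which would not yield $\lim_{i\to\infty}d(f^i(z),f^i(x))=0$), I must connect $f^{m+1}(y)$ to $f^M(x)$ by a $\delta$-chain whose length is exactly $M-m-1$. For large $m$ and $M$ both endpoints lie arbitrarily close to $C$, and since $C$ is chain transitive such a chain exists once its length is admissible; the admissible lengths are constrained by the cyclic (period) structure of the chain component. The point is that $x$ and $y$, lying in the same chain proximal class $D$, converge to $C$ with the same cyclic phase — were their phases different they would be asymptotically trapped in disjoint cyclic pieces of $C$ and hence distal, contradicting chain proximality — so the required residue condition on $M-m-1$ can be met for suitable large $m,M$. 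This phase-respecting bridging is exactly the feature of the chain proximal relation built into the definition of $V^s(D)$ in Section 2, and establishing its availability (together with the length bookkeeping) is the main technical obstacle; once it is in place, the three required properties of $z$ follow as above.
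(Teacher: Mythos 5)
Your overall construction is the right one, and it is essentially the method this paper itself uses to prove the closely related Lemma 3.1 (Lemma 1.1 is only quoted here from \cite{K4}): follow the orbit of $y$, insert a bridge consisting of a $\delta$-chain of exactly the prescribed length, continue with the genuine orbit of $x$, apply s-limit shadowing, and place the shadowing point $z$ in $V^s(D)$ by invariance under the asymptotic relation (this last step is legitimate: it follows from Remark 2.6, or from a one-line triangle inequality against the definitions of $W^s(C)$ and $V^s(D)$). Your observation that the tail must be the orbit of $x$ itself, and not a time-shifted copy, is also exactly the point that forces the length bookkeeping.

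The gap is in the step you yourself identify as the crux. Your justification of the phase agreement --- that if $x$ and $y$ approached $C$ in different cyclic phases they would be distal, ``contradicting chain proximality'' --- is invalid, because there is no chain proximality to contradict: the relation $\sim_C$ is defined only on $C^2$, and two points of $V^s(D)$ need not be proximal in any sense. Indeed $V^s(D)$ contains distal pairs in general: if $C$ is a mixing subshift of finite type, then $m(C,\delta)=1$ for every $\delta>0$, hence $\mathcal{D}(C)=\{C\}$, and any two distinct fixed or periodic points of $C$ form a distal pair inside $V^s(C)$; note also that Corollary 1.2 of the paper explicitly hypothesizes a $\delta$-distal $n$-tuple inside a single $D$, so the premise of your contradiction argument cannot hold. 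The fact you need is true, but for a direct reason requiring no contradiction argument: by the definition of $V^s(D)$, for the $\delta$ furnished by s-limit shadowing and the piece $D_\delta\in\mathcal{D}(C,\delta)$ containing $D$, both $\lim_{i\to\infty}d(f^i(x),f^i(D_\delta))=0$ and $\lim_{i\to\infty}d(f^i(y),f^i(D_\delta))=0$, so for all large $i$ both orbits are $\delta$-close to the \emph{same} piece $f^i(D_\delta)$ of the cyclic decomposition. Consequently, if you cut the orbit of $y$ after time $T_1$ and rejoin the orbit of $x$ at time $T_2$ (your $m$ and $M$), the bridge must run from a point of $f^{T_1+1}(D_\delta)$ (which is $\delta$-close to $f^{T_1+1}(y)$) to a point $w\in f^{T_2-1}(D_\delta)$ with $d(f(w),f^{T_2}(x))\le\delta$, and its prescribed length $T_2-T_1-2$ is \emph{automatically} congruent modulo $|\mathcal{D}(C,\delta)|$ to the phase difference of these two pieces; no residue condition needs to be arranged. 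Property (P4) of Remark 2.3, supplemented by a genuine orbit segment to adjust the residue, then yields a $\delta$-chain inside $C$ of exactly this length once $T_2-T_1$ is sufficiently large. With this substitution, and the routine junction estimates, your proof is complete and coincides in substance with the paper's argument.
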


The first two results are, in fact, direct consequences of Lemma 1.1.

\begin{thm}
Let $f\colon X \to X$ be a continuous map. Let $C\in\mathcal{C}(f)$ and $D\in\mathcal{D}(C)$. Let $\mathcal{G}$ be a full Furstenberg family. If $f$ has the s-limit shadowing property, then for any $n\ge2$,
\[
B=\bigcap_{\epsilon>0}\{(x_1,x_2,\dots,x_n)\in[V^s(D)]^n\colon T_f(x_1,x_2,\dots,x_n;\epsilon)\in\mathcal{G}\}
\]
is a dense subset of $[V^s(D)]^n$.  
\end{thm}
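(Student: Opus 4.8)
The plan is to exploit the fullness of $\mathcal{G}$ together with Lemma 1.1, reducing everything to producing, near an arbitrary target tuple, an $n$-tuple whose orbits are asymptotic to a common point. First I would record a soft observation about $\mathcal{G}$: since $\mathcal{G}$ is a nonempty, hereditary upward Furstenberg family, taking any $A\in\mathcal{G}$ and using $A\subset\mathbb{N}_0$ gives $\mathbb{N}_0\in\mathcal{G}$. Fullness then yields $\{i\in\mathbb{N}_0\colon i\ge N\}\in\mathcal{G}$ for every $N\ge0$, and combining this once more with the hereditary upward property shows that every cofinite subset of $\mathbb{N}_0$ lies in $\mathcal{G}$. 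This is the only feature of $\mathcal{G}$ that the argument will use.

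Next, to establish density, I would fix an arbitrary tuple $(y_1,\dots,y_n)\in[V^s(D)]^n$ and $\epsilon>0$, and construct a point of $B$ lying within $\epsilon$ of it in the (max) product metric. I would set $z_1=y_1$, and for each $j\in\{2,\dots,n\}$ apply Lemma 1.1 with $x=y_1$ and $y=y_j$ to obtain $z_j\in V^s(D)$ with $d(y_j,z_j)\le\epsilon$ and $\lim_{i\to\infty}d(f^i(y_1),f^i(z_j))=0$. By construction $(z_1,\dots,z_n)\in[V^s(D)]^n$ is coordinatewise within $\epsilon$ of $(y_1,\dots,y_n)$.

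Then I would verify $(z_1,\dots,z_n)\in B$. Since every $z_j$ is asymptotic to $z_1=y_1$, the triangle inequality gives $\lim_{i\to\infty}\max_{1\le j<k\le n}d(f^i(z_j),f^i(z_k))=0$. Hence, for each $\epsilon'>0$ there is $N$ with $\{i\colon i\ge N\}\subset T_f(z_1,\dots,z_n;\epsilon')$, so this set is cofinite and therefore belongs to $\mathcal{G}$ by the preliminary observation. As $\epsilon'>0$ is arbitrary, $(z_1,\dots,z_n)$ lies in the intersection defining $B$, which completes the density argument.

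I do not anticipate a serious obstacle: the substance is carried entirely by Lemma 1.1, which supplies the asymptotic perturbations, together with the observation that a full Furstenberg family contains all cofinite sets. The one point to treat with care is the passage, via the triangle inequality, from pairwise proximality of each $z_j$ to the common point $y_1$ to uniform proximality of the whole $n$-tuple; but since only finitely many indices are involved this is routine.
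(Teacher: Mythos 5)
Your proof is correct and follows essentially the same route as the paper: perturb an arbitrary target tuple via Lemma 1.1 into a nearby tuple whose coordinate orbits are mutually asymptotic, then use the fact (nonemptiness, hereditary upward, plus fullness) that $\mathcal{G}$ contains every cofinite subset of $\mathbb{N}_0$, so that each $T_f(z_1,\dots,z_n;\epsilon')$ lies in $\mathcal{G}$. If anything, your choice to anchor every coordinate to the single common point $y_1$ is tidier than the paper's write-up, which anchors to an arbitrary fixed tuple $(a_1,\dots,a_n)\in[V^s(D)]^n$ and therefore implicitly needs that tuple's coordinates to be pairwise asymptotic (e.g.\ all equal, which one may of course arrange) for its inclusion of the set $A$ into the set of pairwise-asymptotic tuples to be valid.
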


\begin{thm}
Let $f\colon X \to X$ be a continuous map. Let $C\in\mathcal{C}(f)$ and $D\in\mathcal{D}(C)$. Let $\mathcal{F},\mathcal{G}$ be full Furstenberg families. For any $n\ge2$ and $\delta>0$, if $f$ has the s-limit shadowing property, and if there is an $(\mathcal{F},\mathcal{G})$-$\delta$-scrambled $n$-tuple
\[
(a_1,a_2,\dots,a_n)\in[V^s(D)]^n
\]
for $f$, then for every $0<r<\delta$, $V^s(D)$ is dense $(\mathcal{F},\mathcal{G})$-$n$-$r$-chaotic for $f$.  
\end{thm}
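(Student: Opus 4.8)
The plan is to use Lemma 1.1 to \emph{transport} the given scrambled tuple $(a_1,a_2,\dots,a_n)$ into an arbitrarily small neighborhood of any prescribed point of $[V^s(D)]^n$, relying on the fact that an asymptotic perturbation of each coordinate leaves the separation and clustering behavior unchanged in the limit. Fix $0<r<\delta$, let $(y_1,y_2,\dots,y_n)\in[V^s(D)]^n$ be arbitrary, and let $\epsilon>0$. For each $1\le j\le n$, I would apply Lemma 1.1 with $x=a_j$ and $y=y_j$ (both lie in $V^s(D)$) to obtain $z_j\in V^s(D)$ with $d(y_j,z_j)\le\epsilon$ and $\lim_{i\to\infty}d(f^i(a_j),f^i(z_j))=0$. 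Then $(z_1,z_2,\dots,z_n)$ lies within $\epsilon$ of $(y_1,y_2,\dots,y_n)$ in each coordinate, and it remains only to verify that it is an $(\mathcal{F},\mathcal{G})$-$r$-scrambled $n$-tuple; since $(y_1,\dots,y_n)$ and $\epsilon$ are arbitrary, this yields density.

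The core observation is that asymptoticity forces the pairwise distances of the $z$-tuple to agree with those of the $a$-tuple in the limit: by the triangle inequality,
\[
\bigl|d(f^i(z_j),f^i(z_k))-d(f^i(a_j),f^i(a_k))\bigr|\le d(f^i(a_j),f^i(z_j))+d(f^i(a_k),f^i(z_k))\xrightarrow[i\to\infty]{}0
\]
for all $1\le j<k\le n$. Hence there is $N\ge0$ such that for all $i\ge N$ every pairwise distance of the $z$-tuple differs from the corresponding one of the $a$-tuple by less than $\delta-r$. Consequently, for $i\ge N$ one has $i\in S_f(a_1,\dots,a_n;\delta)\Rightarrow i\in S_f(z_1,\dots,z_n;r)$, because a pairwise distance exceeding $\delta$ for the $a$-tuple then exceeds $\delta-(\delta-r)=r$ for the $z$-tuple. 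Since $S_f(a_1,\dots,a_n;\delta)\in\mathcal{F}$ and $\mathcal{F}$ is full, the truncated set $\{i\in S_f(a_1,\dots,a_n;\delta):i\ge N\}$ belongs to $\mathcal{F}$; it is contained in $S_f(z_1,\dots,z_n;r)$, so hereditary upwardness gives $S_f(z_1,\dots,z_n;r)\in\mathcal{F}$.

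For the clustering condition I would argue analogously: fixing $\epsilon'>0$, the same estimate produces $N'\ge0$ so that for $i\ge N'$ the pairwise distances of the two tuples differ by less than $\epsilon'/2$, whence $i\in T_f(a_1,\dots,a_n;\epsilon'/2)\Rightarrow i\in T_f(z_1,\dots,z_n;\epsilon')$. Fullness of $\mathcal{G}$ keeps $\{i\in T_f(a_1,\dots,a_n;\epsilon'/2):i\ge N'\}$ in $\mathcal{G}$, and hereditary upwardness then gives $T_f(z_1,\dots,z_n;\epsilon')\in\mathcal{G}$. As $\epsilon'>0$ is arbitrary, $(z_1,z_2,\dots,z_n)$ is $(\mathcal{F},\mathcal{G})$-$r$-scrambled, finishing the density argument.

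The step I expect to be the main subtlety is the bookkeeping in the previous two paragraphs rather than any single hard estimate: since the pairwise distances of the $z$-tuple can be controlled only for large indices, one must discard an initial segment of the relevant time set, and it is precisely the \emph{fullness} of $\mathcal{F}$ and $\mathcal{G}$ that keeps the truncated separation and clustering sets inside the families, after which \emph{hereditary upwardness} lets one pass from those truncated subsets up to the full sets $S_f(z_1,\dots,z_n;r)$ and $T_f(z_1,\dots,z_n;\epsilon')$. The hypothesis $r<\delta$ is exactly what supplies the margin $\delta-r>0$ needed to absorb the vanishing asymptotic error in the separation estimate, which is why the conclusion is stated for $0<r<\delta$ and not for $r=\delta$.
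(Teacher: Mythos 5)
Your proposal is correct and follows essentially the same route as the paper: the paper also applies Lemma 1.1 coordinate-wise to produce a dense set of tuples asymptotic to $(a_1,\dots,a_n)$ and then observes (in one sentence, using fullness of $\mathcal{F},\mathcal{G}$) that each such tuple is $(\mathcal{F},\mathcal{G})$-$r$-scrambled. Your triangle-inequality estimate, truncation via fullness, and appeal to hereditary upwardness simply spell out the details the paper leaves implicit.
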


The next theorem concerns the topological conjugacy between two chain components. For two compact metric spaces $X,Y$ and continuous maps $f\colon X\to X, g\colon Y\to Y$, we say that a homeomorphism $h\colon X\to Y$ is a {\em topological conjugacy} if $h\circ f=g\circ h$ and we denote it as
\[
h\colon(X,f)\to(Y,g).
\]

\begin{thm}
Let $f\colon X \to X$, $g\colon Y\to Y$ be continuous maps where $X,Y$ are compact metric spaces. Let $C\in\mathcal{C}(f)$, $D\in\mathcal{D}(C)$, $C'\in\mathcal{C}(g)$, and let
\[
h\colon(C,f|_C)\to(C',g|_{C'})
\]
be a topological conjugacy. Note that $h(D)\in\mathcal{D}(C')$. Let $\mathcal{F},\mathcal{G}$ be full Furstenberg families. For any $n\ge2$ and $\delta>0$, if $g$ has the limit shadowing property, and if there is an $(\mathcal{F},\mathcal{G})$-$\delta$-scrambled $n$-tuple
\[
(a_1,a_2,\dots,a_n)\in[V^s(D)]^n
\]
for $f$, then  there are $\delta'>0$ and an $(\mathcal{F},\mathcal{G})$-$\delta'$-scrambled $n$-tuple
\[
(b_1,b_2,\dots,b_n)\in[V^s(h(D))]^n
\]
for $g$.
\end{thm}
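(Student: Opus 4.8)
The plan is to transport the scrambled $n$-tuple $(a_1,\dots,a_n)$ for $f$ to one for $g$ by pushing the near-$C$ portion of each orbit through the conjugacy $h$ and then realizing the resulting sequence as a genuine orbit of $g$ via limit shadowing. Concretely, for each $j$ I would choose $c_i^j\in C$ with $d(f^i(a_j),c_i^j)=d(f^i(a_j),C)$; since $a_j\in V^s(D)\subset W^s(C)$, we have $d(f^i(a_j),c_i^j)\to0$. I then consider the sequence $\eta_j=(h(c_i^j))_{i\ge0}$ in $C'\subset Y$ and claim it is a limit-pseudo orbit of $g$. Indeed, the conjugacy gives $g(h(c_i^j))=h(f(c_i^j))$, so by uniform continuity of $h$ on the compact set $C$ it suffices to check $d(f(c_i^j),c_{i+1}^j)\to0$; this follows from the triangle inequality through $f^{i+1}(a_j)$, using uniform continuity of $f$ together with $d(f^i(a_j),c_i^j)\to0$ and $d(f^{i+1}(a_j),c_{i+1}^j)\to0$. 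Applying the limit shadowing property of $g$, I obtain $b_j\in Y$ with $d(g^i(b_j),h(c_i^j))\to0$.

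The tuple $(b_1,\dots,b_n)$ is the candidate. Membership $b_j\in V^s(h(D))$ would be established from the definition of the stable sets in Section 2: since $a_j\in V^s(D)$, its orbit asymptotically tracks the chain-proximal class $D$, and because $h$ is a conjugacy of $(C,f|_C)$ onto $(C',g|_{C'})$ it carries this tracking to tracking of $h(D)$ (recall $h(D)\in\mathcal{D}(C')$); as $b_j$ limit shadows $\eta_j$, its orbit is asymptotic to the $h(D)$-tracking sequence $\eta_j$, whence $b_j\in V^s(h(D))$.

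It remains to verify the scrambling of $(b_1,\dots,b_n)$, where the only distortion to control is that introduced by $h$. For the proximal side, given $\epsilon>0$ I choose $\epsilon'>0$ by uniform continuity of $h$ so that points of $C$ within $2\epsilon'$ have $h$-images within $\epsilon/2$; for all sufficiently large $i$ in the set $T_f(a_1,\dots,a_n;\epsilon')\in\mathcal{G}$, the triangle inequality through the $c_i^j$ and then through the $h(c_i^j)$ (using $d(f^i(a_j),c_i^j)\to0$ and $d(g^i(b_j),h(c_i^j))\to0$) yields $\max_{j<k}d(g^i(b_j),g^i(b_k))<\epsilon$. Hence $T_g(b_1,\dots,b_n;\epsilon)$ contains a tail of $T_f(a_1,\dots,a_n;\epsilon')$, which lies in $\mathcal{G}$ since $\mathcal{G}$ is full, so $T_g(b_1,\dots,b_n;\epsilon)\in\mathcal{G}$ because $\mathcal{G}$ is hereditary upward. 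For the separated side, uniform continuity of $h^{-1}$ furnishes $\delta''>0$ with $d(c,c')\ge\delta/2\Rightarrow d(h(c),h(c'))\ge\delta''$; setting $\delta'=\delta''/2$ and arguing analogously on $S_f(a_1,\dots,a_n;\delta)\in\mathcal{F}$, for all sufficiently large $i$ in that set I get $\min_{j<k}d(g^i(b_j),g^i(b_k))>\delta'$, so $S_g(b_1,\dots,b_n;\delta')$ contains a tail of $S_f(a_1,\dots,a_n;\delta)$ and lies in $\mathcal{F}$ by fullness and heredity. This produces the desired $(\mathcal{F},\mathcal{G})$-$\delta'$-scrambled $n$-tuple.

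I expect the main obstacle to be the membership $b_j\in V^s(h(D))$: unlike the distance estimates, which are routine triangle inequalities combined with the moduli of continuity of $h$ and $h^{-1}$ and the fullness of $\mathcal{F},\mathcal{G}$, this step depends on the precise definition of the stable sets $V^s(\cdot)$ and on the fact that the chain-proximal relation---and hence the labelling of orbits by the class they asymptotically track---is a conjugacy invariant that is preserved under limit-shadowing (asymptotic) equivalence. Some care is also needed to ensure that the finitely many tails used across the different indices $j$ are treated uniformly, but this is harmless.
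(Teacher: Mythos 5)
Your proposal follows essentially the same route as the paper's proof: project the orbit of each $a_j$ onto $C$ to obtain a limit-pseudo orbit, push it through $h$ to a limit-pseudo orbit of $g$ in $C'$, apply the limit shadowing property to get $b_j$, and then transfer both the membership $b_j\in V^s(h(D))$ and the scrambling estimates via uniform continuity of $h$ and $h^{-1}$ together with fullness and upward heredity of $\mathcal{F},\mathcal{G}$. The membership step you flag as the main obstacle is handled in the paper exactly as you sketch it, by choosing for each $\eta'>0$ an $\eta>0$ with $h(D_\eta)\subset D'_{\eta'}$ and using $h(f^i(D_\eta))=g^i(h(D_\eta))$.
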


By Theorems 1.2 and 1.3, we obtain the following corollary.

\begin{cor}
Let $f\colon X \to X$, $g\colon Y\to Y$ be continuous maps where $X,Y$ are compact metric spaces. Let $C\in\mathcal{C}(f)$, $D\in\mathcal{D}(C)$, $C'\in\mathcal{C}(g)$, and let
\[
h\colon(C,f|_C)\to(C',g|_{C'})
\]
be a topological conjugacy. Note that $h(D)\in\mathcal{D}(C')$. Let $\mathcal{F},\mathcal{G}$ be full Furstenberg families. For any $n\ge2$ and $\delta>0$, if $g$ has the s-limit shadowing property, and if $V^s(D)$ is dense $(\mathcal{F},\mathcal{G})$-$n$-$\delta$-chaotic for $f$, then there is $\delta'>0$ such that $V^s(h(D))$ is dense $(\mathcal{F},\mathcal{G})$-$n$-$\delta'$-chaotic for $g$.
\end{cor}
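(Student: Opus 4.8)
The plan is to chain Theorems 1.3 and 1.2 together, transporting the scrambled structure from $(f,V^s(D))$ to $(g,V^s(h(D)))$ across the conjugacy $h$ and then upgrading a single scrambled tuple to dense chaos. No new ideas are needed beyond a careful composition of the two theorems.

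First I would extract a witness tuple on the source side. Since $V^s(D)$ contains the non-empty cell $D$, the product $[V^s(D)]^n$ is non-empty, and the hypothesis that $V^s(D)$ is dense $(\mathcal{F},\mathcal{G})$-$n$-$\delta$-chaotic for $f$ says precisely that the set of $(\mathcal{F},\mathcal{G})$-$\delta$-scrambled $n$-tuples in $[V^s(D)]^n$ is dense, hence non-empty. Thus there is at least one $(\mathcal{F},\mathcal{G})$-$\delta$-scrambled $n$-tuple $(a_1,a_2,\dots,a_n)\in[V^s(D)]^n$ for $f$.

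Next I would transport this tuple to the target side. By Remark 1.3, the s-limit shadowing of $g$ entails the limit shadowing of $g$, so all hypotheses of Theorem 1.3 are satisfied; applying it yields some $\eta>0$ together with an $(\mathcal{F},\mathcal{G})$-$\eta$-scrambled $n$-tuple $(b_1,b_2,\dots,b_n)\in[V^s(h(D))]^n$ for $g$. Finally I would upgrade this single tuple to dense chaos: since $h(D)\in\mathcal{D}(C')$ and $g$ has s-limit shadowing, Theorem 1.2 applies to $g$, $C'$, $h(D)$ and the tuple $(b_1,b_2,\dots,b_n)$, giving that for every $0<r<\eta$ the set $V^s(h(D))$ is dense $(\mathcal{F},\mathcal{G})$-$n$-$r$-chaotic for $g$. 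Choosing, say, $\delta'=\eta/2$ then produces the asserted $\delta'>0$ and completes the argument.

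I do not expect a genuine obstacle, as the corollary is a direct composition of the two theorems; the only points requiring attention are purely bookkeeping. These are: ensuring that $V^s(D)$ is non-empty so that a witness tuple can be extracted from the density hypothesis, invoking Remark 1.3 to convert the s-limit shadowing of $g$ into the limit shadowing demanded by Theorem 1.3, and keeping the two radii $\delta$ and $\eta$ distinct so that the final radius $\delta'$ is taken strictly below $\eta$, as Theorem 1.2 requires.
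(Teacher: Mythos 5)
Your proposal is correct and is exactly the argument the paper intends: the paper derives the corollary by composing Theorems 1.2 and 1.3 in precisely this way (extract a scrambled $n$-tuple from density, transport it via Theorem 1.3 using that s-limit shadowing implies limit shadowing, then upgrade to dense chaos via Theorem 1.2 at some radius $\delta'<\eta$). Your bookkeeping points---non-emptiness of $V^s(D)\supset D$, the appeal to Remark 1.3, and the strict inequality $\delta'<\eta$---are all handled properly.
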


\begin{rem}
\normalfont
Theorem 1.3 and Corollary 1.1 indicate that, if $f\colon X\to X$ has the s-limit shadowing property, then  Li--Yorke type chaotic structure in the basin
\[
W^s(C)=\bigsqcup_{D\in\mathcal{D}(C)}V^s(D)
\]
of $C\in\mathcal{C}(f)$ depends only on the conjugacy class of $(C,f|_C)$. Note that the condition (1) in each of the three theorems of \cite{K4} specifies a conjugacy invariant class of $(C,f|_C)$.
\end{rem}

Let $f\colon X\to X$ be a continuous map. Following \cite{XLT}, we say that a Furstenberg family $\mathcal{F}$ is {\em compatible} with $(X,f)$ if for any open subset $U$ of $X$,
\[
\{x\in X\colon\{i\in\mathbb{N}_0\colon f^i(x)\in U\}\in\mathcal{F}\}
\]
is a $G_\delta$-subset of $X$. For any $n\ge2$, we denote by $f^{\times n}\colon X^n\to X^n$ the direct product of $n$-copies of  $f\colon X\to X$. For any $r>0$, let
\[
\Delta_r(n)=\{(x_1,x_2,\dots,x_n)\in X^n\colon\min_{1\le j<k\le n}d(x_j,x_k)>r\}
\]
and
\[
\Delta^r(n)=\{(x_1,x_2,\dots,x_n)\in X^n\colon\max_{1\le j<k\le n}d(x_j,x_k)<r\},
\]
which are open subsets of $X^n$. We see that
\begin{align*}
S_f(x_1,x_2,\dots,x_n;r)&=\{i\in\mathbb{N}_0\colon\min_{1\le j<k\le n}d(f^i(x_j),f^i(x_k))>r\}\\
&=\{i\in\mathbb{N}_0\colon(f^{\times n})^i(x_1,x_2,\dots,x_n)\in\Delta_r(n)\}
\end{align*}
and
\begin{align*}
T_f(x_1,x_2,\dots,x_n;r)&=\{i\in\mathbb{N}_0\colon\max_{1\le j<k\le n}d(f^i(x_j),f^i(x_k))<r\}\\
&=\{i\in\mathbb{N}_0\colon(f^{\times n})^i(x_1,x_2,\dots,x_n)\in\Delta^r(n)\};
\end{align*}
therefore, if $\mathcal{F}$ is compatible with $(X^n,f^{\times n})$, then for any non-empty subset $Y$ of $X$,
\[
\{(x_1,x_2,\dots,x_n)\in Y^n\colon S_f(x_1,x_2,\dots,x_n;\delta)\in\mathcal{F}\},
\]
$\delta>0$, and
\[
\bigcap_{\epsilon>0}\{(x_1,x_2,\dots,x_n)\in Y^n\colon T_f(x_1,x_2,\dots,x_n;\epsilon)\in\mathcal{G}\}
\]
are $G_\delta$-subsets of $Y^n$.

The next theorem provides a sufficient condition for generic $(\mathcal{F},\mathcal{G})$-$n$-$r$-chaos, $n\ge2$, $0<r<\delta$, under the assumption of the s-limit shadowing and a few moderate assumptions on $\mathcal{F},\mathcal{G}$. It generalizes the implication $(3)\implies(2)$ in the three theorems of \cite{K4}.
 
\begin{thm}
Let $f\colon X \to X$ be a continuous map. Let $C\in\mathcal{C}(f)$ and $D\in\mathcal{D}(C)$. Let $\mathcal{F},\mathcal{G}$ be full Furstenberg families compatible with
\[
(X^n,f^{\times n})
\]
for all $n\ge2$. For any $n\ge2$ and $\delta>0$, if $f$ has the s-limit shadowing property, $\mathcal{F}$ is translation invariant, and if there is an $n$-tuple
\[
(a_1,a_2,\dots,a_n)\in[V^s(D)]^n
\]
such that
\[
S_f(a_1,a_2,\dots,a_n;\delta)\in\mathcal{F},
\]
then for any $E\in\mathcal{D}(C)$ and $0<r<\delta$, $V^s(E)$ is generic $(\mathcal{F},\mathcal{G})$-$n$-$r$-chaotic for $f$.
\end{thm}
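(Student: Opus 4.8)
The plan is to reduce the assertion to two independent density statements and then invoke Baire category. Fix $n\ge2$, $E\in\mathcal{D}(C)$, and $0<r<\delta$. Since $\mathcal{F},\mathcal{G}$ are compatible with $(X^n,f^{\times n})$, the set
\[
P=\{(x_1,\dots,x_n)\in[V^s(E)]^n\colon S_f(x_1,\dots,x_n;r)\in\mathcal{F}\}
\]
and the set $Q=\bigcap_{\epsilon>0}\{(x_1,\dots,x_n)\in[V^s(E)]^n\colon T_f(x_1,\dots,x_n;\epsilon)\in\mathcal{G}\}$ are $G_\delta$-subsets of $[V^s(E)]^n$. Because $V^s(E)$ is a $G_\delta$-subset of $X$, it is completely metrizable, hence so is $[V^s(E)]^n$, which is therefore a Baire space. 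Thus the scrambled set $P\cap Q$ is $G_\delta$, and it suffices to prove that $P$ and $Q$ are each dense: the intersection of two dense $G_\delta$-sets in a Baire space is dense, hence residual, which is precisely generic $(\mathcal{F},\mathcal{G})$-$n$-$r$-chaos.

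Density of $Q$ is immediate. Applying Theorem 1.1 with $E$ in place of $D$—valid since $\mathcal{G}$ is full and $f$ has the s-limit shadowing property—shows that $Q$ is dense in $[V^s(E)]^n$.

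The substance is the density of $P$. Write $A=S_f(a_1,\dots,a_n;\delta)\in\mathcal{F}$. Given a target $(y_1,\dots,y_n)\in[V^s(E)]^n$ and $\eta>0$, I would construct, for each coordinate $j$, a limit-pseudo orbit $\xi_j$ that begins at $y_j$ and is then routed, using chain transitivity of $C$ together with the s-limit shadowing property, so that from some common time $m$ onward the product sequence asymptotically realizes a single time-translate of the separated orbit tuple $\big(f^i(a_1),\dots,f^i(a_n)\big)_{i\ge0}$. The separation constant is degraded by only an arbitrarily small amount, so for a suitable $\delta'\in(r,\delta)$ the resulting separation-time set contains a translate of $A$; by translation invariance and fullness of $\mathcal{F}$ this translate lies in $\mathcal{F}$, and by hereditary upwardness so does $S_f(x_1,\dots,x_n;r)$. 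Applying the s-limit shadowing property to each $\xi_j$ produces $x_j$ with $d(x_j,y_j)\le\eta$ whose orbit limit-shadows $\xi_j$; since $\xi_j$ converges to $C$ and is routed so as to keep $x_j$ in the chain-proximal class of $E$, we obtain $(x_1,\dots,x_n)\in P$ arbitrarily close to the target, giving density of $P$.

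The main obstacle is the bookkeeping of chain-proximal classes in the construction of $\xi_j$: one must realize a full $\mathcal{F}$-separation-time set while keeping each coordinate in the class $V^s(E)$ rather than letting it drift into $V^s(D)$. Asymptotically tracking the class-$D$ tuple would force $x_j\in V^s(D)$, so the separation has to be transported into the class of $E$ by a genuine forward orbit connection from the $D$-region to the $E$-region; this is exactly what chain transitivity of $C$ and s-limit shadowing supply, while translation invariance of $\mathcal{F}$ is what keeps the relocated, time-shifted pattern inside $\mathcal{F}$. Verifying that this relocation can be carried out for \emph{every} $E\in\mathcal{D}(C)$, with the separation-time set still landing in $\mathcal{F}$, is the delicate point on which the proof turns.
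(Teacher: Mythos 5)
Your reduction to the two $G_\delta$ sets $P$ and $Q$ and your treatment of $Q$ via Theorem 1.1 match the paper, but the density of $P$ in $[V^s(E)]^n$ — which is the entire substance of the theorem — has a genuine gap, and the construction you sketch provably cannot close it. You ask each pseudo-orbit $\xi_j$ to asymptotically realize a time-translate of the orbit of $a_j$ and simultaneously ask the limit-shadowing point $x_j$ to lie in $V^s(E)$. These requirements are incompatible for general $E$: if $f^i(x_j)$ is asymptotic to $f^{i+s}(a_j)$, then, since the finitely many classes of each cyclic decomposition $\mathcal{D}(C,\eta)$ are pairwise disjoint compact sets permuted cyclically by $f$, membership $x_j\in V^s(E)$ forces $E_\eta=f^{s\bmod m_\eta}(D_\eta)$ for every $\eta>0$, i.e. $E$ must be the class of $f^s(D)$. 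So your construction can reach only the countably many classes in the forward orbit of $D$ under the induced map on $\mathcal{D}(C)$, whereas the theorem asserts the conclusion for \emph{every} $E\in\mathcal{D}(C)$, and $\mathcal{D}(C)$ can be uncountable (odometer-like chain components). The alternative you invoke — a genuine orbit connection from the $D$-region into the $E$-region after picking up the separation pattern — fails for the opposite reason: for a general full Furstenberg family, membership of $S_f(x_1,\dots,x_n;r)$ in $\mathcal{F}$ cannot be certified by any finite window of separation times, so the pattern must be tracked for all time, which returns you to the previous case. You correctly flag this as ``the delicate point,'' but asserting that chain transitivity plus s-limit shadowing supply the relocation is precisely the step that is false as stated.

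The paper's resolution is a different, non-constructive idea that your proposal is missing. Its Lemma 3.1 produces, for targets $y_j\in\overline{V^s(E)}$, points $z_j\in\overline{V^s(E)}$ (the \emph{closure}, not $V^s(E)$ itself) and time shifts $K,L$ with $d(f^{K+i}(x_j),f^{L+i}(z_j))\le\epsilon$ for all $i\ge0$: each $z_j$ is a limit of points $w_k\in V^s(E)$ whose pseudo-orbits track the $a$-tuple only on growing finite windows and then return to follow an orbit from $V^s(E)$ — so each $w_k$ lies in $V^s(E)$ but tracks only finitely long, while the limit $z_j$ tracks forever but lands only in the closure. This makes
\[
S=\{(x_1,\dots,x_n)\in[\overline{V^s(E)}]^n\colon S_f(x_1,\dots,x_n;r)\in\mathcal{F}\}
\]
a dense subset of the \emph{compact} space $[\overline{V^s(E)}]^n$, and a $G_\delta$ one by compatibility of $\mathcal{F}$; intersecting the two dense $G_\delta$ subsets $S$ and $[V^s(E)]^n$ of $[\overline{V^s(E)}]^n$ by Baire category then shows that $P=S\cap[V^s(E)]^n$ is a dense $G_\delta$ subset of $[V^s(E)]^n$. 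The points of $V^s(E)$ with separation times in $\mathcal{F}$ are thus obtained by a category argument, not by exhibiting pseudo-orbits that end in the class of $E$; note also that compatibility of $\mathcal{F}$ is needed exactly there, in the compact closure, not merely inside $[V^s(E)]^n$ as in your write-up.
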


For a continuous map $f\colon X\to X$, we say that $(x_1,x_2,\dots,x_n)\in X^n$ is a {\em $\delta$-distal} $n$-tuple for $f$
if
\[
\inf_{i\ge0}\min_{1\le j<k\le n}d(f^i(x_j),f^i(x_k))>\delta.
\]
Note that if $(x_1,x_2,\dots,x_n)\in X^n$ is a {\em $\delta$-distal} $n$-tuple for $f$, then
\[
S_f(x_1,x_2,\dots,x_n;\delta)=\mathbb{N}_0\in\mathcal{F}
\]
for every Furstenberg family $\mathcal{F}$. By Theorem 1.4, we obtain the following corollary. 

\begin{cor}
Let $f\colon X \to X$ be a continuous map. Let $C\in\mathcal{C}(f)$ and $D\in\mathcal{D}(C)$. Let $\mathcal{F},\mathcal{G}$ be full Furstenberg families compatible with
\[
(X^n,f^{\times n})
\]
for all $n\ge2$. For any $n\ge2$ and $\delta>0$, if $f$ has the s-limit shadowing property, $\mathcal{F}$ is translation invariant, and if there is a $\delta$-distal $n$-tuple
\[
(a_1,a_2,\dots,a_n)\in D^n
\]
for $f$, then for any $E\in\mathcal{D}(C)$ and $0<r<\delta$, $V^s(E)$ is generic $(\mathcal{F},\mathcal{G})$-$n$-$r$-chaotic for $f$.
\end{cor}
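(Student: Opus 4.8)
The plan is to derive the corollary directly from Theorem 1.4 by checking that a $\delta$-distal $n$-tuple in $D^n$ furnishes exactly the hypothesis required there. Theorem 1.4 asks, in addition to the common assumptions (s-limit shadowing, $\mathcal{F}$ translation invariant, and $\mathcal{F},\mathcal{G}$ full and compatible with $(X^n,f^{\times n})$), for an $n$-tuple $(a_1,a_2,\dots,a_n)\in[V^s(D)]^n$ with $S_f(a_1,a_2,\dots,a_n;\delta)\in\mathcal{F}$, and its conclusion is word-for-word the conclusion of the corollary. Thus the entire task reduces to producing such a tuple from the given $\delta$-distal tuple.

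First I would record the relevant inclusion. Since the partition $W^s(C)=\bigsqcup_{D\in\mathcal{D}(C)}V^s(D)$ extends the partition $C=\bigsqcup_{D\in\mathcal{D}(C)}D$ (as constructed in Section 2), we have $D\subseteq V^s(D)$, and hence $D^n\subseteq[V^s(D)]^n$. Therefore the given tuple $(a_1,a_2,\dots,a_n)\in D^n$ already lies in $[V^s(D)]^n$, which supplies the first condition of Theorem 1.4.

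Second, I would verify that $\delta$-distality forces $S_f(a_1,a_2,\dots,a_n;\delta)\in\mathcal{F}$. By the definition of a $\delta$-distal $n$-tuple, $\inf_{i\ge0}\min_{1\le j<k\le n}d(f^i(a_j),f^i(a_k))>\delta$, so $\min_{1\le j<k\le n}d(f^i(a_j),f^i(a_k))>\delta$ for every $i\ge0$, i.e.\ $S_f(a_1,a_2,\dots,a_n;\delta)=\mathbb{N}_0$. Since $\mathcal{F}$ is a Furstenberg family it is nonempty, so there is some $A\in\mathcal{F}$; then $A\subseteq\mathbb{N}_0$ together with the hereditary upward property yields $\mathbb{N}_0\in\mathcal{F}$. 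This is precisely the observation stated just before the corollary, and it gives $S_f(a_1,a_2,\dots,a_n;\delta)=\mathbb{N}_0\in\mathcal{F}$.

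With $(a_1,a_2,\dots,a_n)\in[V^s(D)]^n$ and $S_f(a_1,a_2,\dots,a_n;\delta)\in\mathcal{F}$ in hand, all hypotheses of Theorem 1.4 are met, and applying it shows that for any $E\in\mathcal{D}(C)$ and $0<r<\delta$, $V^s(E)$ is generic $(\mathcal{F},\mathcal{G})$-$n$-$r$-chaotic for $f$, as claimed. There is essentially no obstacle here: the corollary is a specialization of Theorem 1.4 in which the $\mathcal{F}$-largeness of the scrambling-time set is automatic. The only point needing care is confirming $D\subseteq V^s(D)$ from the construction in Section 2; once that is granted, the argument is a matter of unwinding definitions.
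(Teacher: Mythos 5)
Your proof is correct and matches the paper's own route exactly: the paper likewise observes that a $\delta$-distal $n$-tuple gives $S_f(a_1,\dots,a_n;\delta)=\mathbb{N}_0\in\mathcal{F}$ (by nonemptiness and the hereditary upward property) and uses the inclusion $D\subset V^s(D)$ from Section 2 to place the tuple in $[V^s(D)]^n$, then applies Theorem 1.4. Nothing is missing; this is precisely the intended specialization.
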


This paper consists of four sections. In Section 2, we give the definition of the $G_\delta$-partition mentioned above. In Section 3, we prove the main theorems. In Section 4, we give an example concerning Theorem 1.4.

\section{The $G_\delta$-partition}

In this section, following \cite{K4}, we give the precise definition of the partition
\[
X=\bigsqcup_{C\in\mathcal{C}(f)}W^s(C)=\bigsqcup_{C\in\mathcal{C}(f)}\bigsqcup_{D\in\mathcal{D}(C)}V^s(D)
\]
for any continuous map $f\colon X\to X$. In what follows $X$ denotes a compact metric space endowed with a metric $d$. We begin with a definition.

\begin{defi}
\normalfont
Given a continuous map $f\colon X\to X$ and $\delta>0$, a finite sequence $(x_i)_{i=0}^{k}$ of points in $X$, where $k>0$ is a positive integer, is called a {\em $\delta$-chain} of $f$ if $d(f(x_i),x_{i+1})\le\delta$ for every $0\le i\le k-1$.  A $\delta$-chain $(x_i)_{i=0}^{k}$ of $f$ with $x_0=x_k$ is said to be a {\em $\delta$-cycle} of $f$. 
\end{defi}

Let $f\colon X\to X$ be a continuous map. For any $x,y\in X$ and $\delta>0$, the notation $x\rightarrow_\delta y$ means that there is a $\delta$-chain $(x_i)_{i=0}^k$ of $f$ with $x_0=x$ and $x_k=y$. We write $x\rightarrow y$ if $x\rightarrow_\delta y$ for all $\delta>0$. We say that $x\in X$ is a {\em chain recurrent point} for $f$ if $x\rightarrow x$, or equivalently, for any $\delta>0$, there is a $\delta$-cycle $(x_i)_{i=0}^{k}$ of $f$ with $x_0=x_k=x$. Let $CR(f)$ denote the set of chain recurrent points for $f$. We define a relation $\leftrightarrow$ in
\[
CR(f)^2=CR(f)\times CR(f)
\]
by: for any $x,y\in CR(f)$, $x\leftrightarrow y$ if and only if $x\rightarrow y$ and $y\rightarrow x$. Note that $\leftrightarrow$ is a closed equivalence relation in $CR(f)^2$ and satisfies $x\leftrightarrow f(x)$ for all $x\in CR(f)$. An equivalence class $C$ of $\leftrightarrow$ is called a {\em chain component} for $f$. We denote by $\mathcal{C}(f)$ the set of chain components for $f$.

A subset $S$ of $X$ is said to be $f$-invariant if $f(S)\subset S$. For an $f$-invariant subset $S$ of $X$, we say that $f|_S\colon S\to S$ is {\em chain transitive} if for any $x,y\in S$ and $\delta>0$, there is a $\delta$-chain $(x_i)_{i=0}^k$ of $f|_S$ with $x_0=x$ and $x_k=y$.

\begin{rem}
\normalfont
The following properties hold
\begin{itemize}
\item $CR(f)=\bigsqcup_{C\in\mathcal{C}(f)}C$,
\item every $C\in\mathcal{C}(f)$ is a closed $f$-invariant subset of $CR(f)$,
\item $f|_C\colon C\to C$ is chain transitive for all $C\in\mathcal{C}(f)$,
\item for any $f$-invariant subset $S$ of $X$, if $f|_S\colon S\to S$ is chain transitive, then $S\subset C$ for some $C\in\mathcal{C}(f)$.
\end{itemize}
\end{rem}

\begin{rem}
\normalfont
We recall the so-called fundamental theorem of dynamical systems by Conley \cite{C} (see also \cite{H}). Given any continuous map $f\colon X\to X$, we say that a continuous function $\Lambda\colon X\to\mathbb{R}$ is a {\em complete Lyapunov function} for $f$ if the following conditions are satisfied
\begin{itemize}
\item $\Lambda(f(x))<\Lambda(x)$ for every $x\in X\setminus CR(f)$,
\item for any $x,y\in CR(f)$,  $\Lambda(x)=\Lambda(y)$ if and only if $x\leftrightarrow y$,
\item $\Lambda(CR(f))$ is a nowhere dense subset of $\mathbb{R}$.
\end{itemize}

\begin{theorem}
There is a complete Lyapunov function $\Lambda\colon X\to\mathbb{R}$ for $f$.
\end{theorem}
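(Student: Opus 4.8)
This is Conley's fundamental theorem of dynamical systems, so the plan is to reconstruct the attractor–repeller proof in the setting of a (not necessarily invertible) continuous map. Call an open set $U\subset X$ a \emph{trapping region} if $f(\overline{U})\subset U$, and associate to it the \emph{attractor}
\[
A(U)=\bigcap_{n\ge0}f^n(\overline{U})
\]
together with its \emph{dual repeller}
\[
A(U)^*=\{x\in X\colon\omega_f(x)\cap A(U)=\emptyset\},
\]
where $\omega_f(x)$ is the forward limit set of $x$. Both sets are compact and $f$-invariant, and they are disjoint. The goal is to attach to each such pair a continuous \emph{elementary Lyapunov function} $\ell\colon X\to[0,1]$ that equals $0$ on $A(U)$, equals $1$ on $A(U)^*$, satisfies $\ell(f(x))\le\ell(x)$ everywhere, and is strictly decreasing, $\ell(f(x))<\ell(x)$, at every $x\notin A(U)\cup A(U)^*$. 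Summing countably many of these with carefully chosen weights will produce $\Lambda$.

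Two structural facts drive the argument. First, the chain recurrent set is exactly the set of points that no attractor–repeller pair can separate:
\[
CR(f)=\bigcap_{U}\bigl(A(U)\cup A(U)^*\bigr),
\]
the intersection ranging over all trapping regions, and for $x,y\in CR(f)$ one has $x\leftrightarrow y$ if and only if every attractor $A(U)$ contains both or neither of $x,y$. The inclusion $CR(f)\subset\bigcap_{U}(A(U)\cup A(U)^*)$ is the easy direction; the reverse requires showing that a point $x$ with $x\not\to x$ is genuinely separated by some attractor built from the $\delta$-chain-reachable set of $x$. Second, because $X$ is compact and hence second countable, the family of all attractors is countable: each attractor is determined by a trapping region, and one checks that distinct attractors already arise from a countable basis of trapping neighborhoods. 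Enumerate the attractors as $A_1,A_2,\dots$, with dual repellers $A_1^*,A_2^*,\dots$, and let $\ell_n$ be the elementary Lyapunov function of the pair $(A_n,A_n^*)$.

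With these in hand I would set
\[
\Lambda=\sum_{n\ge1}\frac{2}{3^{\,n}}\,\ell_n,
\]
which converges uniformly, hence is continuous. Since each $\ell_n$ is non-increasing along orbits, so is $\Lambda$; and if $x\notin CR(f)$ then $x\notin A_n\cup A_n^*$ for some $n$ by the first structural fact, so $\ell_n(f(x))<\ell_n(x)$ while every other term does not increase, giving $\Lambda(f(x))<\Lambda(x)$. On $CR(f)$ each $\ell_n$ takes only the values $0$ and $1$, so $\Lambda$ maps $CR(f)$ into the set of numbers $\sum_n\epsilon_n\,2\cdot3^{-n}$ with $\epsilon_n\in\{0,1\}$, i.e.\ into the standard middle-thirds Cantor set; this set is nowhere dense, which is the third defining property. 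Finally the base-$3$ digits with values in $\{0,2\}$ are read off injectively, so for $x,y\in CR(f)$ we get $\Lambda(x)=\Lambda(y)$ exactly when $\ell_n(x)=\ell_n(y)$ for all $n$, which by the first fact is exactly $x\leftrightarrow y$. This verifies the three conditions.

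The main obstacle is the first structural fact, namely the attractor characterization of chain recurrence together with the construction of the elementary functions. Producing, for a point $x\notin CR(f)$, an attractor–repeller pair that strictly separates $x$ from $f(x)$ requires analyzing the set of points reachable from $x$ by $\delta$-chains and passing to the limit $\delta\to0$; this is where the $\delta$-chain combinatorics already set up in Section 2 are essential, and where non-invertibility of $f$ must be handled with some care (one works with forward limit sets rather than full orbits). Ensuring that each $\ell_n$ is simultaneously continuous \emph{and} strictly decreasing off $A_n\cup A_n^*$ is the other delicate point: a naive choice such as $\sup_{k\ge0}g(f^k(x))$ with $g=d(\cdot,A)/(d(\cdot,A)+d(\cdot,A^*))$ is only lower semicontinuous and need not decrease strictly, so one must smooth it, which is the technical heart of Conley's argument.
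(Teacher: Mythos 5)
The paper does not prove this statement at all: it appears inside Remark 2.2 as a recollection of Conley's fundamental theorem of dynamical systems, with the proof delegated to the cited references \cite{C} and \cite{H} (Hurley's paper is exactly the extension of Conley's attractor--repeller argument to continuous maps). So the only meaningful comparison is with that classical argument, and your outline follows it faithfully: trapping regions and attractors $A(U)$ with dual repellers $A(U)^*$, the characterization $CR(f)=\bigcap_U\bigl(A(U)\cup A(U)^*\bigr)$ together with the refinement that two chain recurrent points are $\leftrightarrow$-equivalent if and only if no attractor separates them, countability of the family of attractors via a countable basis, elementary Lyapunov functions $\ell_n$, and the weighted sum $\sum_n 2\cdot 3^{-n}\ell_n$ sending $CR(f)$ into the middle-thirds Cantor set, from which the three defining properties of a complete Lyapunov function follow. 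This skeleton is correct, and every claim in it is true.

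As a standalone proof, however, it has genuine gaps, which you yourself flag: the two facts carrying all of the content --- (i) the attractor characterization of $CR(f)$ and of the relation $\leftrightarrow$, and (ii) the existence, for each attractor--repeller pair, of a \emph{continuous} Lyapunov function strictly decreasing off $A_n\cup A_n^*$ --- are asserted rather than proven; everything else (countability, uniform convergence, the base-$3$ digit argument) is bookkeeping. Concerning (ii), there is also a misstatement: with $A^*$ taken to be the complement of the basin of $A$, the function $u(x)=\sup_{k\ge0}g(f^k(x))$, where $g=d(\cdot,A)/(d(\cdot,A)+d(\cdot,A^*))$, is in fact continuous, not merely lower semicontinuous --- upper semicontinuity follows from the uniform attraction $f^n(\overline{U})\downarrow A$ applied on a neighborhood of any basin point, while $u\equiv1$ on $A^*$ handles the remaining points. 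What genuinely fails for $u$ is strict decrease (the supremum may be attained at a later time), and the standard repair is not a smoothing but the orbital average $\ell(x)=\sum_{k\ge0}2^{-k-1}u(f^k(x))$: this is continuous, equals $0$ exactly on $A$ and $1$ exactly on $A^*$, and is strictly decreasing elsewhere because for $x$ in the basin one has $u(f^k(x))\to0$, so $u$ cannot be constant and positive along the orbit. With (i) and (ii) actually established, as in \cite{C} or \cite{H}, your outline does assemble into a complete proof of the statement.
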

\end{rem}

Let $C\in\mathcal{C}(f)$ and let $g=f|_C\colon C\to C$. Given any $\delta>0$, the {\em length} of a $\delta$-cycle $(x_i)_{i=0}^k$ of $g$ is defined to be $k$. Let $m=m(C,\delta)>0$ be the greatest common divisor of the lengths of all $\delta$-cycles of $g$. A relation $\sim_{C,\delta}$ in $C^2$ is defined by: for any $x,y\in C$, $x\sim_{C,\delta}y$ if and only if there is a $\delta$-chain $(x_i)_{i=0}^k$ of $g$ with $x_0=x$, $x_k=y$, and $m|k$. 

\begin{rem}
\normalfont
The following properties hold 
\begin{itemize}
\item[(P1)] $\sim_{C,\delta}$ is an open and closed $(g\times g)$-invariant equivalence relation in $C^2=C\times C$,
\item[(P2)] any $x,y\in C$ with $d(x,y)\le\delta$ satisfies $x\sim_{C,\delta}y$, so for every $\delta$-chain $(x_i)_{i=0}^k$ of $g$, we have $g(x_i)\sim_{C,\delta}x_{i+1}$ for each $0\le i\le k-1$, implying $x_i\sim_{C,\delta}g^i(x_0)$ for every $0\le i\le k$,
\item[(P3)] for any $x\in C$ and $n\ge0$, $x\sim_{C,\delta}g^{mn}(x)$,
\item[(P4)] there exists $N>0$ such that for any $x,y\in C$ with $x\sim_{C,\delta}y$ and $n\ge N$, there is a $\delta$-chain $(x_i)_{i=0}^k$ of $g$ with $x_0=x$, $x_k=y$, and $k=mn$.
\end{itemize}
A proof of property (P2) can be found in \cite{K1}. Property (P4) is due to Lemma 2.3 of \cite{BMR} and in fact a consequence of a Schur's theorem implying that for any positive integers $m,k_1,k_2,\dots,k_n$, if
\[
\gcd(k_1,k_2,\dots,k_n)=m,
\]
then every sufficiently large multiple $M$ of $m$ can be expressed as a linear combination
\[
M=a_1k_1+a_2k_2+\cdots+a_nk_n
\]
where $a_1,a_2,\dots,a_n$ are non-negative integers. 
\end{rem}

Fix $x\in C$ and let $D_i$, $i\ge0$, denote the equivalence class of $\sim_{C,\delta}$ including $g^i(x)$. Then, $D_m=D_0$, and
\[
C=\bigsqcup_{i=0}^{m-1}D_i
\]
gives the partition of $C$ into the equivalence classes of $\sim_{C,\delta}$. Note that every $D_i$, $0\le i\le m-1$, is an open and closed subset of $C$ and satisfies $g(D_i)=D_{i+1}$. We call
\[
\mathcal{D}(C,\delta)=\{D_i\colon 0\le i\le m-1\}
\]
the {\em $\delta$-cyclic decomposition} of $C$.

\begin{defi}
\normalfont
We define a relation $\sim_C$ in $C^2$ by: for any $x,y\in C$, $x\sim_C y$ if and only if $x\sim_{C,\delta}y$ for all $\delta>0$, which is a closed $(g\times g)$-invariant equivalence relation in $C^2$. We denote by $\mathcal{D}(C)$ the set of equivalence classes of $\sim_C$.
\end{defi}

\begin{rem}
\normalfont
The relation $\sim_C$ was introduced in \cite{Shi} and rediscovered in \cite{RW} (based on the argument given in \cite[Exercise 8.22]{A}). We say that $(x,y)\in C^2$ is a {\em chain proximal pair} for $g$ if for any $\delta>0$, there is a pair of $\delta$-chains
\[
((x_i)_{i=0}^k,(y_i)_{i=0}^k)
\]
of $g$ with $(x_0,y_0)=(x,y)$ and $x_k=y_k$. As stated in Remark 8 of \cite{RW}, it holds that for any $x,y\in C$, $x\sim_C y$ if and only if $(x,y)$ is a chain proximal pair for $g$. By this equivalence, we say that $\sim_C$ is a {\em chain proximal relation} for $g$.
\end{rem}

\begin{rem}
\normalfont
Let $X,Y$ be compact metric spaces and let $f\colon X\to X$, $g\colon Y\to Y$ be continuous maps. Let $C\in\mathcal{C}(f),C'\in\mathcal{C}(g)$ and let
\[
h\colon(C,f|_C)\to(C',g|_{C'})
\]
be a topological conjugacy. For any $x,y\in C$, $(x,y)$ is a chain proximal pair for $f|_C$ if and only if $(h(x),h(y))$ is a chain proximal pair for $g|_{C'}$, thus $x\sim_C y$ if and only if $h(x)\sim_{C'}h(y)$. By this, we observe that $h(D)\in\mathcal{D}(C')$ for every $D\in\mathcal{D}(C)$ and that $\hat{h}\colon\mathcal{D}(C)\to\mathcal{D}(C')$ defined by
$\hat{h}(D)=h(D)$ for all $D\in\mathcal{D}(C)$ is a bijection. 
\end{rem}

Given any continuous map $f\colon X\to X$, let
\[
W^s(C)=\{x\in X\colon\lim_{i\to\infty}d(f^i(x),C)=0\}
\]
for all $C\in\mathcal{C}(f)$.  Note that $C\subset W^s(C)$ for every $C\in\mathcal{C}(f)$. A proof of the following lemma is given in \cite{K4}.
 
\begin{lem}
We have the following properties
\begin{itemize}
\item
\[
X=\bigsqcup_{C\in\mathcal{C}(f)}W^s(C),
\]
\item $W^s(C)$ is a $G_\delta$-subset of $X$ for every $C\in\mathcal{C}(f)$.
\end{itemize}
\end{lem}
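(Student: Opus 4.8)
The plan is to route both assertions through a complete Lyapunov function $\Lambda\colon X\to\mathbb{R}$ for $f$, whose existence is guaranteed by Conley's theorem (Remark 2.3), and to extract from it the single invariant
\[
L(x)=\inf_{i\ge0}\Lambda(f^i(x)),\qquad x\in X.
\]
First I would note that $\Lambda(f(y))\le\Lambda(y)$ for every $y\in X$: this is immediate for $y\notin CR(f)$, while for $y\in CR(f)$ one has $y\leftrightarrow f(y)$ and hence $\Lambda(f(y))=\Lambda(y)$. Consequently $(\Lambda(f^i(x)))_{i\ge0}$ is non-increasing, and bounded below by compactness of $X$, so the infimum defining $L(x)$ is in fact a limit, $L(x)=\lim_{i\to\infty}\Lambda(f^i(x))$.

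Next I would pin down the dynamical meaning of $L$ through the $\omega$-limit set $\omega(x)=\bigcap_{N\ge0}\overline{\{f^i(x)\colon i\ge N\}}$, which is non-empty, compact and $f$-invariant. For any $y\in\omega(x)$, choosing $f^{i_k}(x)\to y$ gives $\Lambda(y)=L(x)$; since $f(y)\in\omega(x)$ as well, this yields $\Lambda(f(y))=L(x)=\Lambda(y)$, and the strict-decrease property of $\Lambda$ off $CR(f)$ forces $y\in CR(f)$. Thus $\omega(x)\subset CR(f)$ and $\Lambda\equiv L(x)$ on $\omega(x)$; because $\Lambda$ separates distinct chain components on $CR(f)$, the whole set $\omega(x)$ lies in a single chain component $C(x)$ with $\Lambda(C(x))=L(x)$. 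A short argument then shows $x\in W^s(C)$ if and only if $\omega(x)\subset C$ (one direction uses that $C$ is closed, the other the standard fact $d(f^i(x),\omega(x))\to0$), whence
\[
x\in W^s(C)\iff C(x)=C\iff L(x)=\Lambda(C),
\]
the last equivalence holding because $\Lambda$ takes distinct values on distinct chain components.

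With the characterization $W^s(C)=\{x\in X\colon L(x)=\Lambda(C)\}$ in hand, both conclusions follow. The partition is immediate: every $x$ lies in $W^s(C(x))$ since $L(x)=\Lambda(C(x))$, giving the covering, while distinct chain components $C\ne C'$ satisfy $\Lambda(C)\ne\Lambda(C')$, so the level sets $W^s(C)$ are pairwise disjoint. For the $G_\delta$ claim I would use that $L$, being an infimum of the continuous functions $x\mapsto\Lambda(f^i(x))$, is upper semicontinuous; hence $\{L\ge\Lambda(C)\}$ is closed (so $G_\delta$ in a metric space) and $\{L\le\Lambda(C)\}=\bigcap_{n\ge1}\{L<\Lambda(C)+1/n\}$ is a countable intersection of open sets, so that
\[
W^s(C)=\{L\ge\Lambda(C)\}\cap\{L\le\Lambda(C)\}
\]
is $G_\delta$. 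I expect the genuine content to sit in this last point: the naive description $W^s(C)=\bigcap_n\bigcup_N\bigcap_{i\ge N}\{x\colon d(f^i(x),C)<1/n\}$ only exhibits $W^s(C)$ as a set of $F_{\sigma\delta}$ type, and it is precisely the monotonicity of $\Lambda$ along orbits—which collapses the limit into an infimum and thereby into an upper semicontinuous function—that upgrades this to a $G_\delta$-subset.
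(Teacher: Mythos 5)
Your proof is correct, but note that this paper does not actually print a proof of the lemma: it defers to \cite{K4}, so what can be compared is the machinery the paper assembles around it (Remarks 2.1 and 2.3). Every step of your argument checks out: $\Lambda\circ f^{i}$ is non-increasing along orbits (this uses $x\leftrightarrow f(x)$ and the $f$-invariance of $CR(f)$), so $L=\inf_{i\ge0}\Lambda\circ f^{i}$ is a limit and is upper semicontinuous as an infimum of continuous functions; $\omega(x)$ is non-empty, forward invariant, and carried into $CR(f)$ with $\Lambda$ constant on it; the \emph{if and only if} in the second property of a complete Lyapunov function is exactly what places $\omega(x)$ inside a single chain component and what separates the level sets of $L$; and writing $W^{s}(C)=\{L\ge\Lambda(C)\}\cap\bigcap_{n\ge1}\{L<\Lambda(C)+1/n\}$ exhibits the basin as the intersection of a closed set with a $G_\delta$-set, hence $G_\delta$. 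Where you diverge from the route the paper's own framework suggests is in the partition statement: the covering $X=\bigsqcup_{C\in\mathcal{C}(f)}W^{s}(C)$ is standardly obtained from the fact that $\omega(x)$ is chain transitive and hence contained in some $C\in\mathcal{C}(f)$ (the fourth bullet of Remark 2.1), with disjointness following because distinct chain components are disjoint compact sets; the complete Lyapunov function of Remark 2.3 is then reserved for the $G_\delta$ assertion, essentially by your usc/level-set computation. Your version instead routes both bullets through the single invariant $L$, which is a clean unification at the cost of invoking Conley's theorem even for the soft covering statement. Either way, you correctly locate the genuine content of the lemma: the naive description of $W^{s}(C)$ as $\bigcap_{n}\bigcup_{N}\bigcap_{i\ge N}\{x\colon d(f^{i}(x),C)<1/n\}$ is only of $F_{\sigma\delta}$ type, and it is the monotonicity of $\Lambda$ along orbits that collapses the limit into an upper semicontinuous infimum and upgrades the basin to a $G_\delta$-set.
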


Let $C\in\mathcal{C}(f)$ and let $D\in\mathcal{D}(C)$. By the definition of $\sim_C$, for any $\delta>0$, there is unique $D_\delta\in\mathcal{D}(C,\delta)$ such that $D\subset D_\delta$. Then, $D_{\delta_1}\subset D_{\delta_2}$ for all $0<\delta_1<\delta_2$, and
\[
D=\bigcap_{\delta>0}D_\delta.
\]
Let
\[
V^s(D)=\bigcap_{\delta>0}\{x\in W^s(C)\colon\lim_{i\to\infty}d(f^i(x),f^i(D_{\delta}))=0\},
\]
$D\in\mathcal{D}(C)$, and note that $D\subset V^s(D)$ for every $D\in\mathcal{D}(C)$. A proof of the following lemma is also given in \cite{K4}.

\begin{lem}
We have the following properties
\begin{itemize}
\item
\[
W^s(C)=\bigsqcup_{D\in\mathcal{D}(C)}V^s(D),
\]
\item $V^s(D)$ is a $G_\delta$-subset of $W^s(C)$ for every $D\in\mathcal{D}(C)$.
\end{itemize}
\end{lem}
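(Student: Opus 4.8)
The plan is to prove the two assertions separately, the common engine being the geometry of the $\delta$-cyclic decomposition. Fix $\delta>0$ and write $\mathcal{D}(C,\delta)=\{D_0,\dots,D_{m-1}\}$ with $m=m(C,\delta)$ and $g(D_j)=D_{j+1}$ (indices mod $m$). These are finitely many pairwise disjoint compact sets, so there is $\rho=\rho(\delta)>0$ with $d(D_j,D_{j'})>2\rho$ whenever $j\neq j'$; since each $f^i$ merely permutes the pieces cyclically, the same bound $d(f^i(D_j),f^i(D_{j'}))>2\rho$ holds uniformly in $i$. The first step is a phase lemma: for $x\in W^s(C)$ we have $d(f^i(x),C)\to0$, so by uniform continuity of $f$ there is $N_0$ such that for $i\ge N_0$ the point $f^i(x)$ lies within $\rho$ of exactly one piece $D_{j(i)}$ and $j(i+1)\equiv j(i)+1\pmod m$; thus the phase $j(i)-i\bmod m$ is eventually constant.

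For the $G_\delta$ assertion, set $A_\delta=\{x\in W^s(C):\lim_i d(f^i(x),f^i(D_\delta))=0\}$, so that $V^s(D)=\bigcap_{\delta>0}A_\delta$. The key is a collapse of the limit condition: for $x\in W^s(C)$ I claim $x\in A_\delta$ if and only if $d(f^i(x),f^i(D_\delta))<\rho$ for infinitely many $i$. One direction is immediate; for the other, any such $i\ge N_0$ forces the unique piece within $\rho$ of $f^i(x)$ to be $f^i(D_\delta)=D_{i\bmod m}$ (two pieces both within $\rho$ of one point would be closer than $2\rho$), so the phase is aligned from time $i$ onward, and then $d(f^i(x),C)\to0$ upgrades alignment to $d(f^i(x),f^i(D_\delta))\to0$. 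Granting this,
\[
A_\delta=\bigcap_{N\ge0}\bigcup_{i\ge N}\{x\in W^s(C):d(f^i(x),f^i(D_\delta))<\rho\},
\]
where each braced set is open by continuity of $f^i$ and of $y\mapsto d(y,f^i(D_\delta))$; hence $A_\delta$ is $G_\delta$ in $W^s(C)$. Finally $f^i(D_{\delta_1})\subset f^i(D_{\delta_2})$ for $\delta_1<\delta_2$ gives $A_{\delta_1}\subset A_{\delta_2}$, so $V^s(D)=\bigcap_{k\ge1}A_{1/k}$ is a countable intersection of $G_\delta$-sets, hence $G_\delta$.

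For the partition, disjointness uses the same separation: if $x\in V^s(D)\cap V^s(D')$ with $D\neq D'$, then from $D=\bigcap_\delta D_\delta$ and $D'=\bigcap_\delta D'_\delta$ there is $\delta_0$ with $D_{\delta_0}\neq D'_{\delta_0}$, and the uniform bound $d(f^i(D_{\delta_0}),f^i(D'_{\delta_0}))>2\rho(\delta_0)$ contradicts $d(f^i(x),f^i(D_{\delta_0}))\to0$ and $d(f^i(x),f^i(D'_{\delta_0}))\to0$. For covering, fix $x\in W^s(C)$; the phase lemma yields, for each $\delta>0$, a unique $E_\delta\in\mathcal{D}(C,\delta)$ matching the eventual phase of $x$, with $d(f^i(x),f^i(E_\delta))\to0$. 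If $\delta_1<\delta_2$ and $E'\in\mathcal{D}(C,\delta_2)$ is the piece containing $E_{\delta_1}$ (using that $\mathcal{D}(C,\delta_1)$ refines $\mathcal{D}(C,\delta_2)$, since a $\delta_1$-chain is a $\delta_2$-chain and $m(C,\delta_2)\mid m(C,\delta_1)$), then $d(f^i(x),f^i(E'))\le d(f^i(x),f^i(E_{\delta_1}))\to0$, so uniqueness forces $E'=E_{\delta_2}$ and $E_{\delta_1}\subset E_{\delta_2}$. Thus $\{E_\delta\}_{\delta>0}$ is a nested family of nonempty compacta; picking $c\in\bigcap_{\delta>0}E_\delta$ and setting $D=[c]_{\sim_C}\in\mathcal{D}(C)$, the $\mathcal{D}(C,\delta)$-piece $D_\delta$ containing $D$ also contains $c$, hence equals $E_\delta$, and therefore $x\in V^s(D)$.

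I expect the $G_\delta$ assertion to be the main obstacle. A direct expansion of ``$\lim=0$'' nests the quantifiers as $\bigcap_k\bigcup_N\bigcap_{i\ge N}$, producing only a set of higher Borel class; the whole point is that the clopen cyclic structure on $C$, through the fixed separation radius $\rho$, lets one replace the vanishing limit by a single ``infinitely often within $\rho$'' condition and thereby drop the complexity to $G_\delta$. The only delicate bookkeeping is in the phase lemma---choosing the uniform-continuity threshold below $\rho$ so that a single application of $f$ advances the phase by exactly one---which I would state once and reuse throughout.
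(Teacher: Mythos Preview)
Your argument is correct. The paper itself does not prove this lemma; it only states that ``A proof of the following lemma is also given in \cite{K4}'' and moves on. So there is nothing in the present paper to compare your proof against.

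As a self-contained argument, your approach is sound. The key idea---using the uniform separation $2\rho(\delta)$ between the clopen pieces of $\mathcal{D}(C,\delta)$, together with the fact that $f$ permutes them cyclically, to establish a phase lemma---is exactly the right one. Your collapse of the limit condition in $A_\delta$ to an ``infinitely often within $\rho$'' condition is the decisive step that brings the Borel complexity down to $G_\delta$; the pointwise verification (with $N_0$ depending on $x$) is handled correctly. The nesting $A_{\delta_1}\subset A_{\delta_2}$ for $\delta_1<\delta_2$, and hence the reduction of $V^s(D)$ to a countable intersection $\bigcap_k A_{1/k}$, is valid. For the partition, your disjointness argument via the separation bound is clean, and your covering argument---producing the nested family $\{E_\delta\}$ via uniqueness of the phase at each scale, then extracting $D$ from a point in the intersection---is correct; the refinement $\mathcal{D}(C,\delta_1)\preceq\mathcal{D}(C,\delta_2)$ for $\delta_1<\delta_2$ follows as you say from $m(C,\delta_2)\mid m(C,\delta_1)$ and the fact that every $\delta_1$-chain is a $\delta_2$-chain. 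One cosmetic remark: when you write $f^i(D_\delta)=D_{i\bmod m}$ you are implicitly taking $D_\delta=D_0$, but the meaning is clear.
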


\begin{rem}
\normalfont
We can show that for any $C\in\mathcal{C}(f)$ and $D\in\mathcal{D}(C)$,
\[
V^s(D)=\{x\in W^s(C)\colon\liminf_{i\to\infty}d(f^i(x),f^i(D))=0\}.
\]
\end{rem}

By the above two lemmas, we obtain
\[
X=\bigsqcup_{C\in\mathcal{C}(f)}W^s(C)=\bigsqcup_{C\in\mathcal{C}(f)}\bigsqcup_{D\in\mathcal{D}(C)}V^s(D).
\]
Moreover, $V^s(D)$ is a $G_\delta$-subset of $X$ for all $C\in\mathcal{C}(f)$ and $D\in\mathcal{D}(C)$.

\begin{rem}
\normalfont
Given a continuous map $f\colon X\to X$, let $C,C'\in\mathcal{C}(f)$ and $D\in\mathcal{D}(C),D'\in\mathcal{D}(C')$. For any $x,y\in X$, if $(x,y)$ is a proximal pair for $f$, i.e.,
\[
\liminf_{i\to\infty}d(f^i(x),f^i(y))=0,
\]
then $C=C'$ and $D=D'$. Let
\[
\mathcal{F}_{\rm infinite}=\{A\subset\mathbb{N}_0\colon |A|=\infty\},
\]
a Furstenberg family. For any $x_1,x_2,\dots,x_n\in X$, $n\ge2$, we have
\[
\liminf_{i\to\infty}\max_{1\le j<k\le n}d(f^i(x_j),f^i(x_k))=0,
\]
i.e., $(x_1,x_2,\dots,x_n)$ is a proximal $n$-tuple for $f$ if and only if
\[
T_f(x_1,x_2,\dots,x_n;\epsilon)\in\mathcal{F}_{\rm infinite}
\]
for all $\epsilon>0$. If a Furstenberg family $\mathcal{G}$ is full, then $\mathcal{G}\subset\mathcal{F}_{\rm infinite}$. It follows that for any $x_1,x_2,\dots,x_n\in X$, if $\mathcal{G}$ is a full Furstenberg family, and if
\[
T_f(x_1,x_2,\dots,x_n;\epsilon)\in\mathcal{G}
\]
for all $\epsilon>0$, then $x_1,x_2,\dots,x_n\in V^s(D)$ for some $C\in\mathcal{C}(f)$ and $D\in\mathcal{D}(C)$.  
\end{rem}

\section{Proof of Theorems}

In this section, we prove the main theorems. First, Theorems 1.1 and 1.2 are immediate consequences of Lemma 1.1 in Section 1.

\begin{proof}[Proof of Theorem 1.1]
Fix $(a_1,a_2,\dots,a_n)\in[V^s(D)]^n$. By Lemma 1.1,
\[
A=\{(x_1,x_2,\dots,x_n)\in[V^s(D)]^n\colon\lim_{i\to\infty}d(f^i(a_j),f^i(x_j))=0,\:1\le\forall j\le n\}
\]
is a dense subset of $[V^s(D)]^n$. Note that
\[
A\subset\{(x_1,x_2,\dots,x_n)\in[V^s(D)]^n\colon\lim_{i\to\infty}\max_{1\le j<k\le n}d(f^i(x_j),f^i(x_k))=0\}.
\]
Since $\mathcal{G}$ is full, we have
\[
\{(x_1,x_2,\dots,x_n)\in[V^s(D)]^n\colon\lim_{i\to\infty}\max_{1\le j<k\le n}d(f^i(x_j),f^i(x_k))=0\}\subset B.
\]
It follows that $B$ is a dense subset of $[V^s(D)]^n$, proving the theorem. 
\end{proof}

\begin{proof}[Proof of Theorem 1.2]
By Lemma 1.1,
\[
A=\{(x_1,x_2,\dots,x_n)\in[V^s(D)]^n\colon\lim_{i\to\infty}d(f^i(a_j),f^i(x_j))=0,\:1\le\forall j\le n\}
\]
is a dense subset of $[V^s(D)]^n$. Since $(a_1,a_2,\dots,a_n)$ is $(\mathcal{F},\mathcal{G})$-$\delta$-scrambled for $f$, and $\mathcal{F},\mathcal{G}$ are both full, every $(x_1,x_2,\dots,x_n)\in A$ is $(\mathcal{F},\mathcal{G})$-$r$-scrambled for $f$. It follows that $V^s(D)$ is dense $(\mathcal{F},\mathcal{G})$-$n$-$r$-chaotic for $f$, proving the theorem. 
\end{proof}

Next, we prove Theorem 1.3.  

\begin{proof}[Proof of Theorem 1.3]
For each $1\le j \le n$, since $a_j\in V^s(D)\subset W^s(C)$ and so
\[
\lim_{i\to\infty}d(f^i(a_j),C)=0,
\]
by taking $x^{(j)}_i\in C$, $i\ge0$, with $d(f^i(a_j),x^{(j)}_i)=d(f^i(a_j),C)$ for all $i\ge0$, we have
\[
\tag{A} \lim_{i\to\infty}d(f^i(a_j),x^{(j)}_i)=0
\]
and
\[
\lim_{i\to\infty}d(f(x^{(j)}_i),x^{(j)}_{i+1})=0.
\]
Fix $1\le j \le n$. Since $a_j\in V^s(D)$, we have
\[
\lim_{i\to\infty}d(f^i(a_j),f^i(D_\eta))=0
\]
and so
\[
\lim_{i\to\infty}d(x^{(j)}_i,f^i(D_\eta))=0
\]
for all $\eta>0$ and $D_\eta\in\mathcal{D}(C,\eta)$ with $D\subset D_\eta$. Let $d'$ denote the metric on $Y$. Letting
\[
\tag{B} y^{(j)}_i=h(x^{(j)}_i)
\]
for all $i\ge0$, we obtain
\[
\lim_{i\to\infty}d'(g(y^{(j)}_i),y^{(j)}_{i+1})=\lim_{i\to\infty}d'(g(h(x^{(j)}_i)),h(x^{(j)}_{i+1}))=\lim_{i\to\infty}d'(h(f(x^{(j)}_i)),h(x^{(j)}_{i+1}))=0,
\]
and
\[
\lim_{i\to\infty}d'(y^{(j)}_i,g^i(h(D_\eta)))=\lim_{i\to\infty}d'(h(x^{(j)}_i),h(f^i(D_\eta)))=0
\]
for all $\eta>0$ and $D_\eta\in\mathcal{D}(C,\eta)$ with $D\subset D_\eta$. Since $g$ has the limit shadowing property, we have
\[
\tag{C} \lim_{i\to\infty}d'(g^i(b_j),y^{(j)}_i)=0
\]
for some $b_j\in Y$. For every $\eta'>0$, by taking $\eta>0$ with $h(D_\eta)\subset D'_{\eta'}$, where $D'_{\eta'}\in\mathcal{D}(C',\eta')$ and $h(D)\subset D'_{\eta'}$, we obtain
\[
\lim_{i\to\infty}d'(g^i(b_j),g^i(D'_{\eta'}))=\lim_{i\to\infty}d'(g^i(b_j),g^i(h(D_\eta)))=\lim_{i\to\infty}d'(y^{(j)}_i,g^i(h(D_\eta)))=0.
\]
It follows that $b_j\in V^s(h(D))$. Since $(a_1,a_2,\dots,a_n)$ is $(\mathcal{F},\mathcal{G})$-$\delta$-scrambled for $f$, and $\mathcal{F},\mathcal{G}$ are both full, by (A), (B), (C), we conclude that $(b_1,b_2,\dots,b_n)\in[V^s(h(D))]^n$ is $(\mathcal{F},\mathcal{G})$-$\delta'$-scrambled for $g$ for some $\delta'>0$, completing the proof. 
\end{proof}

For the proof of Theorem 1.4, we need a lemma which may be of independent interest.

\begin{lem}
Let $f\colon X\to X$ be a continuous map. Let $C\in\mathcal{C}(f)$ and $D, E\in\mathcal{D}(C)$. If $f$ has the s-limit shadowing property, then for any $n\ge1$, $x_1,x_2,\dots,x_n\in V^s(D)$, $y_1,y_2,\dots,y_n\in\overline{V^s(E)}$, and $\epsilon>0$, there are $z_1,z_2,\dots,z_n\in\overline{V^s(E)}$ and $K,L\ge0$ such that
\[
d(y_j,z_j)\le\epsilon
\]
for all $1\le j\le n$, and
\[
d(f^{K+i}(x_j),f^{L+i}(z_j))\le\epsilon
\]
for all $1\le j\le n$ and $i\ge0$.
\end{lem}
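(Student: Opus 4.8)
The plan is to prove the statement one coordinate at a time: for each $j$ I would construct a single $\delta$-limit-pseudo orbit $\xi^{(j)}$ of $f$ and obtain $z_j$ as a point that $\epsilon$-limit shadows it. Fix $\epsilon>0$ and let $\delta>0$ be the constant furnished by the s-limit shadowing property for this $\epsilon$. I want $\xi^{(j)}$ to have three features: its $0$-th term should be $y_j$, so that the shadowing point satisfies $d(y_j,z_j)=d(f^0(z_j),\xi^{(j)}_0)\le\epsilon$; its tail should stay $\epsilon$-close to $f^{K+i}(x_j)$, so that $d(f^{K+i}(x_j),f^{L+i}(z_j))\le\epsilon$ for all $i$; and it should converge, in the sense of the cyclic decompositions, toward the class $E$, so that $z_j\in\overline{V^s(E)}$. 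A key simplification is that all the $x_j$ lie in the \emph{same} set $V^s(D)$, so the phase information governing the choice of $K,L$ is common to every $j$; together with the finiteness of the index set, this lets a single pair $K,L$ and a single calibration of thresholds serve all $n$ coordinates at once. For the special case $D=E$ (equivalently, when $D$ and $E$ lie on one orbit of the induced map on the chain-proximal classes), this is essentially the mechanism already encapsulated in Lemma~1.1, and I would regard Lemma~1.1 as the engine to be adapted.

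For the explicit construction I would fix $\eta\le\delta$ and use the $\eta$-cyclic decomposition $\mathcal{D}(C,\eta)$ of period $m=m(C,\eta)$, with $D\subset D_\eta$ and $E\subset E_\eta$. Since $g=f|_C$ permutes the $m$ classes of $\mathcal{D}(C,\eta)$ as a single $m$-cycle, I can choose $K,L\ge0$ with $g^K(D_\eta)=g^L(E_\eta)$, and take $K$ large enough (using $x_j\in W^s(C)$) that $d(f^{K+i}(x_j),C)<\epsilon/2$ for all $i\ge0$ and all $j$. Projecting $f^{K+i}(x_j)$ to a nearest point $c^{(j)}_i\in C$ produces a limit-pseudo orbit in $C$ that satisfies $d(c^{(j)}_i,f^{K+i}(x_j))<\epsilon/2$ for every $i$ and lies in the class $g^{K+i}(D_\eta)=g^{L+i}(E_\eta)$ once $i$ is large. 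I would then assemble $\xi^{(j)}$ in three stages: start at $y_j$ and pass (via one small jump, using uniform continuity of $f$ and a point of $V^s(E)$ near $y_j$ when $y_j$ is only in the closure) to the true orbit of a point tracking $E$, follow it until it is $\eta$-close to $C$ in the appropriate cyclic class, then splice onto the tail $(c^{(j)}_i)$ by a finite $\eta$-chain in $C$ whose length is a multiple of $m$; property~(P4) legitimises the length and (P2)--(P3) match the phase. Only finitely many splicing errors occur, each $\le\eta\le\delta$, while the projection errors tend to $0$, so $\xi^{(j)}$ is a genuine $\delta$-limit-pseudo orbit and s-limit shadowing yields $z_j$ with the two required estimates.

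The step I expect to be the main obstacle is the membership $z_j\in\overline{V^s(E)}$. The tail $(c^{(j)}_i)$ constructed above makes $z_j$ asymptotic to the orbit of $x_j$, hence to the \emph{coarse} $\eta$-class containing $E$, whereas membership in $V^s(E)$ refers to the \emph{finer} chain-proximal class $E$, which is only approached in the $\liminf$ sense recorded in Section~2. Thus a priori $z_j$ lands in $V^s(D')$ for some chain-proximal class $D'\subset E_\eta$, not in $V^s(E)$ itself, and one cannot make the tail simultaneously track $E$ and stay $\epsilon$-close to the $x_j$-orbit, since a single $\eta$-class need not have small diameter. To close this gap I would exploit the freedom in $\eta$: because $E=\bigcap_{\eta>0}E_\eta$ and the clopen classes $E_\eta$ decrease to $E$, running the construction along a sequence $\eta_k\downarrow0$ produces shadowing points whose tails track ever smaller neighbourhoods of $E$, and a limiting argument, using that $\overline{V^s(E)}$ is closed, should place the resulting point in $\overline{V^s(E)}$ while preserving the two $\epsilon$-estimates. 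Reconciling this refinement with a fixed shift $K,L$—equivalently, controlling the interplay between the genuine-limit tracking of the $\eta$-class and the $\liminf$-tracking of $E$—is the delicate heart of the argument and is where I expect the real work to concentrate; the earlier stages are routine bookkeeping once the shift and the cyclic data are in place.
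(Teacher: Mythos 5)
Your phase bookkeeping (coarse classes, (P4)-chains, a common shift for all $j$) matches the paper's, and you have correctly located the crux: membership of $z_j$ in $\overline{V^s(E)}$. But the design you commit to --- one pseudo-orbit per $j$ whose tail is the projection of the orbit of $x_j$ to $C$ --- cannot be repaired, and your proposed $\eta_k\downarrow0$ fix does not close the gap. Since the projection errors tend to $0$, limit shadowing forces $d(f^{L+i}(z_j),f^{K+i}(x_j))\to0$, i.e.\ $z_j$ is asymptotic, up to shift, to $x_j$; this is strictly stronger than the $\epsilon$-estimate the lemma asks for, and it is fatal. Test it on an odometer $f\colon X\to X$ (a minimal isometry with s-limit shadowing, a single chain component): there every class in $\mathcal{D}(C)$ is a singleton and $V^s(\{x\})=\{x\}$, so the lemma demands $z_1=y_1$, while asymptotic tracking up to shift forces (by isometry) $f^L(z_1)=f^K(x_1)$, i.e.\ that $y_1$ lie on the orbit of $x_1$ --- false for all but countably many $y_1$. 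Your limiting argument over scales $\eta_k\downarrow0$ inherits the same obstruction: (i) the point produced at scale $\eta_k$ lies in $U_k=\{x\in W^s(C)\colon\lim_{i\to\infty} d(f^i(x),f^i(E_{\eta_k}))=0\}$, \emph{not} in $\overline{V^s(E)}$, and a limit of such points lies only in $\bigcap_k\overline{U_k}$, which can be strictly larger than $\overline{\bigcap_k U_k}=\overline{V^s(E)}$, so closedness of $\overline{V^s(E)}$ does not finish; (ii) holding $K,L$ fixed across all scales requires the congruences $K-L\equiv r_k\pmod{m(C,\eta_k)}$ (with $r_k$ the cyclic phase gap between $D_{\eta_k}$ and $E_{\eta_k}$) to hold simultaneously for every $k$, which is exactly the condition that $E$ lie on the orbit of $D$ in the inverse limit of the cyclic groups $\mathbb{Z}/m(C,\eta_k)$ --- again false in general (the odometer is the universal example).

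The missing idea is to give up asymptotic tracking of $x_j$ altogether and use finite excursions plus compactness; this is what the paper does. Pick $a_j\in V^s(E)$ with $d(y_j,a_j)\le\epsilon/2$. For each $k>M$, build a $2\delta$-limit-pseudo orbit $\xi_k$ that follows the \emph{genuine} orbit of $a_j$, splices by a (P4)-chain of fixed length $mN$ into the orbit of $x_j$, follows $f^{mM}(x_j),\dots,f^{mk-1}(x_j)$ only on a finite window, then splices back by another chain of length $mN$ and follows the genuine tail of the orbit of $a_j$ forever. Because the tail of $\xi_k$ is a true orbit asymptotic to that of $a_j$, the s-limit shadowing point $w_k$ lies \emph{exactly} in $V^s(E)$ --- no fine-class analysis is needed --- while $d(a_j,w_k)\le\epsilon/2$ and $d(f^{mM+i}(x_j),f^{a+mJ+mN+i}(w_k))\le\epsilon$ on the window $0\le i\le m(k-M)-1$; crucially, $K=mM$ and $L=a+mJ+mN$ do not depend on $k$. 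Passing to a subsequence $w_{k_l}\to z_j$ gives $z_j\in\overline{V^s(E)}$ (a limit of points of $V^s(E)$), $d(y_j,z_j)\le\epsilon$, and, since each fixed $i$ eventually falls inside the window, $d(f^{K+i}(x_j),f^{L+i}(z_j))\le\epsilon$ for all $i\ge0$. This finite-window-plus-limit-point device is forced on you by the odometer obstruction above; it is also exactly why the lemma asserts only closure membership and an $\epsilon$-bound, rather than the asymptotics your construction was aiming to produce.
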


\begin{proof}
Since $f$ has the s-limit shadowing property, there is $\delta>0$ such that every $2\delta$-limit-pseudo orbit of $f$ is $\epsilon/2$-limit shadowed by some point of $X$. For this $\delta>0$, we take $D_\delta, E_\delta\in\mathcal{D}(C,\delta)$ with $D\subset D_\delta$ and $E\subset E_\delta$. Let $m=|\mathcal{D}(C,\delta)|$ and note that $f^m(D_\delta)=D_\delta$, $f^m(E_\delta)=E_\delta$. By property (P4) of $\sim_{C,\delta}$, we can choose $N>0$ such that for any $u,v\in D_\delta$ and $n\ge N$, there is a $\delta$-chain $(z_i)_{i=0}^{mn}$ of $f$ with $z_0=u$ and $z_{mn}=v$. Since $x_1,x_2,\dots,x_n\in V^s(D)$, there is $M>0$ such that
\[
d(f^{mk}(x_j),D_\delta)=d(f^{mk}(x_j),f^{mk}(D_\delta))\le\delta
\]
for all $1\le j\le n$ and $k\ge M$. Since $y_1,y_2,\dots,y_n\in\overline{V^s(E)}$, there are $a_1,a_2,\dots,a_n\in V^s(E)$ such that
\[
d(y_j,a_j)\le\epsilon/2
\]
for all $1\le j\le n$. We take  $a>0$ with $f^a(E_\delta)=D_\delta$. Then, since $a_1,a_2,\dots,a_n\in V^s(E)$, there is $J>0$ such that
\[
d(f^{a+m\alpha}(a_j),D_\delta)\le\delta
\]
for all $1\le j\le n$ and $\alpha\ge J$. Fix $1\le j\le n$ and take $p\in D_\delta$, $q_k,r_k\in D_\delta$, $k\ge M$, such that the following properties hold
\begin{itemize}
\item $d(f^{a+mJ}(a_j),p)=d(f^{a+mJ}(a_j),D_\delta)\le\delta$,
\item $d(f^{mk}(x_j),q_k)=d(f^{mk}(x_j),D_\delta)\le\delta$ for all $k\ge M$,
\item $d(f^{a+mJ+m(k-M+2N)}(a_j),r_k)=d(f^{a+mJ+m(k-M+2N)}(a_j),D_\delta)\le\delta$ for all $k\ge M$.
\end{itemize}
Since $p,q_M\in D_\delta$, the choice of $N$ gives a $\delta$-chain $(z_i)_{i=0}^{mN}$ of $f$ with $z_0=p$ and $z_{mN}=q_M$. For every $k>M$, since $q_k,r_k\in D_\delta$, the choice of $N$ gives a $\delta$-chain $(w^{(k)}_i)_{i=0}^{mN}$ of $f$ with $w^{(k)}_0=q_k$ and $w^{(k)}_{mN}=r_k$. For each $k>M$, consider the $2\delta$-limit-pseudo orbit
\begin{align*}
\xi_k=&(a_j,f(a_j),\dots,f^{a+mJ-1}(a_j),z_0,z_1,\dots,z_{mN-1},f^{mM}(x_j),f^{mM+1}(x_j),\dots,f^{mk-1}(x_j),\\
&w^{(k)}_0,w^{(k)}_1,\dots,w^{(k)}_{mN-1},f^{a+mJ+m(k-M+2N)}(a_j),f^{a+mJ+m(k-M+2N)+1}(a_j),\dots)
\end{align*}
of $f$, which is $\epsilon/2$-limit shadowed by $w_k\in X$. For every $k>M$, by $a_j\in V^s(E)$ and  
\[
\lim_{i\to\infty}d(f^{a+mJ+m(k-M+2N)+i}(a_j),f^{a+mJ+m(k-M+2N)+i}(w_k))=0,
\]
we obtain $w_k\in V^s(E)$. Since
\[
d(f^{mM+i}(x_j),f^{a+mJ+mN+i}(w_k))\le\epsilon
\]
for all $k>M$ and $0\le i\le m(k-M)-1$, by taking a sequence $M<k_1<k_2<\cdots$ and $z_j\in X$ with
\[
\lim_{l\to\infty}w_{k_l}=z_j,
\]
we have $z_j\in\overline{V^s(E)}$, and
\[
d(f^{mM+i}(x_j),f^{a+mJ+mN+i}(z_j))\le\epsilon
\]
for all $i\ge0$. Note that $d(a_j,w_{k_l})\le\epsilon/2$ for all $l\ge1$. This implies $d(a_j,z_j)\le\epsilon/2$ and so $d(y_j,z_j)\le\epsilon$. Since $1\le j\le n$ is arbitrary, letting $K=mM$ and $L=a+mJ+mN$, we obtain
\[
d(f^{K+i}(x_j),f^{L+i}(z_j))\le\epsilon
\]
for all $1\le j\le n$ and $i\ge0$, completing the proof.
\end{proof}

Finally, we prove Theorem 1.4.

\begin{proof}[Proof of Theorem 1.4]
Since $\mathcal{F},\mathcal{G}$ are compatible with $(X^n,f^{\times n})$, for every non-empty subset $Y$ of $X$,
\[
\{(x_1,x_2,\dots,x_n)\in Y^n\colon S_f(x_1,x_2,\dots,x_n;r)\in\mathcal{F}\},
\]
$r>0$, and
\[
\bigcap_{\epsilon>0}\{(x_1,x_2,\dots,x_n)\in Y^n\colon T_f(x_1,x_2,\dots,x_n;\epsilon)\in\mathcal{G}\}
\]
are $G_\delta$-subsets of $Y^n$. Given any $E\in\mathcal{D}(C)$ and $0<r<\delta$, since $\mathcal{F}$ is full and translation invariant, and $(a_1,a_2,\dots,a_n)\in[V^s(D)]^n$ satisfies
\[
S_f(a_1,a_2,\dots,a_n;\delta)\in\mathcal{F},
\]
by Lemma 3.1, 
\[
S=\{(x_1,x_2,\dots,x_n)\in[\overline{V^s(E)}]^n\colon S_f(x_1,x_2,\dots,x_n;r)\in\mathcal{F}\}
\]
is a dense $G_\delta$-subset of $[\overline{V^s(E)}]^n$. Since $[V^s(E)]^n$ is a dense $G_\delta$-subset of $[\overline{V^s(E)}]^n$,
\[
A=S\cap[V^s(E)]^n=\{(x_1,x_2,\dots,x_n)\in[V^s(E)]^n\colon S_f(x_1,x_2,\dots,x_n;r)\in\mathcal{F}\}
\]
is a dense $G_\delta$-subset of $[\overline{V^s(E)}]^n$, implying that $A$ is a dense $G_\delta$-subset of $[V^s(E)]^n$. On the other hand, since $\mathcal{G}$ is full, by Theorem 1.1,
\[
B=\bigcap_{\epsilon>0}\{(x_1,x_2,\dots,x_n)\in[V^s(E)]^n\colon T_f(x_1,x_2,\dots,x_n;\epsilon)\in\mathcal{G}\}
\]
is a dense $G_\delta$-subset of $[V^s(E)]^n$. We conclude that $A\cap B$ is a dense $G_\delta$-subset of $[V^s(E)]^n$, thus $V^s(E)$ is generic $(\mathcal{F},\mathcal{G})$-$n$-$r$-chaotic for $f$, proving the theorem.
\end{proof}

\section{Example}

In this section, we give an example concerning Theorem 1.4 in Section 1. We need a lemma. Let $X$ be a compact metric space endowed with a metric $d$. For any $\alpha>1$, we define a metric $D$ on $X^\mathbb{N}$ by
\[
D(x,y)=\sup_{n\ge1}\alpha^{-n}d(x_n,y_n)
\]
for all $x=(x_n)_{n\ge1},y=(y_n)_{n\ge1}\in X^\mathbb{N}$. The shift map $\sigma\colon X^\mathbb{N}\to X^\mathbb{N}$ is defined by for any $x=(x_n)_{n\ge1},y=(y_n)_{n\ge1}\in X^\mathbb{N}$, $y=\sigma(x)$ if only if $y_n=x_{n+1}$ for all $n\ge1$.

\begin{lem}
Let $\xi=(x^{(i)})_{i\ge0}$ be a sequence of points in $X^\mathbb{N}$. Let $x_n=x_1^{(n-1)}$ for every $n\ge1$, and let $x=(x_n)_{n\ge1}\in X^\mathbb{N}$.
\begin{itemize}
\item If $\xi$ is a limit-pseudo orbit of $\sigma$, then $\xi$ is limit shadowed by $x$.
\item For any $\delta>0$, if  $\xi$ is a $\delta$-limit-pseudo orbit of $\sigma$, then $\xi$ is $\delta/(\alpha-1)$-limit shadowed by $x$.
\end{itemize}
\end{lem}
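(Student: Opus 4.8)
The plan is to reduce both items to a single quantitative estimate controlling $D(\sigma^i(x),x^{(i)})$ in terms of the \emph{defects} $\epsilon_i:=D(\sigma(x^{(i)}),x^{(i+1)})$. First I would unwind the definitions. Writing $x^{(i)}=(x^{(i)}_n)_{n\ge1}$, the defining relation $x_n=x_1^{(n-1)}$ gives, for every $i,n$,
\[
\sigma^i(x)_n=x_{n+i}=x_1^{(n+i-1)}=x^{(n+i-1)}_1 .
\]
Thus $D(\sigma^i(x),x^{(i)})=\sup_{n\ge1}\alpha^{-n}d\bigl(x^{(i)}_n,x^{(i+n-1)}_1\bigr)$, and the whole problem is to bound $d\bigl(x^{(i)}_n,x^{(i+n-1)}_1\bigr)$.

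The key step is a telescoping estimate along the \emph{anti-diagonal} $k\mapsto x^{(i+k)}_{n-k}$, $0\le k\le n-1$, which interpolates between $x^{(i)}_n$ (at $k=0$) and $x^{(i+n-1)}_1$ (at $k=n-1$). From the very definition of $D$ one has, for any $u,v\in X^\mathbb{N}$ and any $m\ge1$, the pointwise bound $d(u_m,v_m)\le\alpha^m D(u,v)$. Applying this with $u=\sigma(x^{(i+k)})$, $v=x^{(i+k+1)}$ and $m=n-k-1$, and using $\sigma(x^{(i+k)})_{n-k-1}=x^{(i+k)}_{n-k}$, I get
\[
d\bigl(x^{(i+k)}_{n-k},x^{(i+k+1)}_{n-k-1}\bigr)\le\alpha^{n-k-1}\epsilon_{i+k}.
\]
Summing over $0\le k\le n-2$ via the triangle inequality and dividing by $\alpha^n$ yields $\alpha^{-n}d(x^{(i)}_n,x^{(i+n-1)}_1)\le\sum_{k=0}^{n-2}\alpha^{-k-1}\epsilon_{i+k}$, and taking the supremum over $n\ge1$ gives the master estimate
\[
D(\sigma^i(x),x^{(i)})\le\sum_{k=0}^{\infty}\alpha^{-k-1}\epsilon_{i+k}.
\]

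With this in hand both conclusions are immediate. For the second item, a $\delta$-limit-pseudo orbit satisfies $\epsilon_j\le\delta$ for all $j$, so the right-hand side is at most $\delta\sum_{k\ge0}\alpha^{-k-1}=\delta/(\alpha-1)$, giving the uniform bound; the accompanying limit condition $D(\sigma^i(x),x^{(i)})\to0$ then follows from the first item, which applies because a $\delta$-limit-pseudo orbit is in particular a limit-pseudo orbit. For the first item, $\epsilon_j\to0$; given $\eta>0$ I choose $K$ with $\epsilon_j\le\eta$ for $j\ge K$, so that for $i\ge K$ the master estimate bounds $D(\sigma^i(x),x^{(i)})$ by $\eta/(\alpha-1)$, whence $D(\sigma^i(x),x^{(i)})\to0$ and $\xi$ is limit shadowed by $x$.

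I do not expect a serious obstacle here: once the anti-diagonal telescoping is set up correctly the rest is geometric-series bookkeeping. The only point requiring care is keeping the indices $i+k$ and $n-k$ consistent across the chain (in particular checking the endpoint identifications $x^{(i+k)}_{n-k}=\sigma(x^{(i+k)})_{n-k-1}$ and $x^{(i+n-1)}_1=\sigma^i(x)_n$), and noting that the weight $\alpha^{-n}$ exactly absorbs the growth $\alpha^{n-k-1}$ produced by the supremum metric, so that the summed bound becomes $n$-independent and the supremum over $n$ is harmless.
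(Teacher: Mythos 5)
Your proof is correct and follows essentially the same route as the paper: your anti-diagonal chain $x^{(i+k)}_{n-k}$ is exactly the paper's telescoping sum (with $j=n-k-1$ as the index), and the weight $\alpha^{-n}$ absorbing $\alpha^{n-k-1}$ into a geometric series is the same computation. The only cosmetic difference is that the paper bounds the tail via $\beta_i=\sup_{l\ge i}\epsilon_l$ rather than your explicit $\epsilon$--$K$ argument; both yield $D(\sigma^i(x),x^{(i)})\le\beta_i/(\alpha-1)$ and the two conclusions follow identically.
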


\begin{proof}
Let $\epsilon_l=D(\sigma(x^{(l)}),x^{(l+1)})$ for all $l\ge0$, and let $\beta_i=\sup_{l\ge i}\epsilon_l$ for all $i\ge0$. Given any $i\ge0$ and $n\ge1$, we have
\begin{align*}
d(\sigma^i(x)_n,x_n^{(i)})&=d(x_{i+n},x_n^{(i)})=d(x_1^{(i+n-1)},x_n^{(i)})\\
&\le\sum_{j=1}^{n-1}d(x_j^{(i+n-j)},x_{j+1}^{(i+n-j-1)})=\sum_{j=1}^{n-1}d(x_j^{(i+n-j)},\sigma(x^{(i+n-j-1)})_j)\\
&\le\sum_{j=1}^{n-1}\alpha^j D(x^{(i+n-j)},\sigma(x^{(i+n-j-1)}))=\sum_{j=1}^{n-1}\alpha^j\epsilon_{i+n-j-1}.
\end{align*}
It follows that for any $i\ge0$ and $n\ge1$,
\begin{align*}
\alpha^{-n}d(\sigma^i(x)_n,x_n^{(i)})&\le\sum_{j=1}^{n-1}\alpha^{-n+j}\epsilon_{i+n-j-1}=\sum_{k=1}^{n-1}\alpha^{-k}\epsilon_{i+k-1}\\
&\le\sum_{k=1}^{n-1}\alpha^{-k}\beta_i\le\sum_{k=1}^\infty\alpha^{-k}\beta_i=\beta_i/(\alpha-1).
\end{align*}
We obtain
\[
D(\sigma^i(x),x^{(i)})=\sup_{n\ge1}\alpha^{-n}d(\sigma^i(x)_n,x_n^{(i)})\le\beta_i/(\alpha-1)
\]
for all $i\ge0$. If $\xi$ is a limit-pseudo orbit of $\sigma$, then since
\[
\lim_{i\to\infty}D(\sigma^i(x),x^{(i)})=\lim_{i\to\infty}\beta_i=0,
\]
$\xi$ is limit shadowed by $x$. Moreover, if $\xi$ is a $\delta$-limit-pseudo orbit of $\sigma$, then
we have
\[
D(\sigma^i(x),x^{(i)})\le\beta_i/(\alpha-1)\le\beta_0/(\alpha-1)\le\delta/(\alpha-1)
\]
for all $i\ge0$, thus $\xi$ is $\delta/(\alpha-1)$-limit shadowed by $x$. This completes the proof.
\end{proof}

\begin{rem}
\normalfont
Let $S$ be a finite set endowed with a metric $d$. For any $\alpha>1$, we define a metric $D$ on $S^\mathbb{N}$ by
\[
D(x,y)=\sup_{n\ge1}\alpha^{-n}d(x_n,y_n)
\]
for all $x=(x_n)_{n\ge1},y=(y_n)_{n\ge1}\in S^\mathbb{N}$. Let $\sigma\colon S^\mathbb{N}\to S^\mathbb{N}$ be the shift map. A closed $\sigma$-invariant subset $Y$ of $S^\mathbb{N}$ is called a {\em subshift}. A subshift $Y$ of $S^\mathbb{N}$ is called a {\em subshift of finite type} if there are $N\ge1$ and $F\subset S^{N+1}$ such that for any $x=(x_n)_{n\ge1}\in S^\mathbb{N}$, $x\in Y$ if and only if $(x_n,x_{n+1},\dots,x_{n+N})\in F$ for all $n\ge1$. If $Y$ is a subshift of finite type, then for any $\epsilon>0$, there is $\delta>0$ such that every $\delta$-limit-pseudo orbit $\xi=(x^{(i)}) _{i\ge0}$ of $\sigma|_Y$ is $\epsilon$-limit shadowed by $x=(x_1^{(n-1)})_{n\ge1}\in Y$.
\end{rem}

The following example is taken from \cite{K2}. Let $\sigma\colon[-1,1]^\mathbb{N}\to[-1,1]^\mathbb{N}$ be the shift map and let $D$ be the metric on $[-1,1]^\mathbb{N}$ defined by
\[
D(x,y)=\sup_{n\ge1}2^{-n}|x_n-y_n|
\]
for all $x=(x_n)_{n\ge1},y=(y_n)_{n\ge1}\in[-1,1]^\mathbb{N}$. Let $s=(s_k)_{k\ge 1}$ be a sequence of numbers with $0<s_1<s_2<\cdots$ and $\lim_{k\to\infty}s_k=1$. Put
\[
S=\{-1,1\}\cup\{-s_k\colon k\ge1\}\cup\{s_k\colon k\ge1\},
\]
a closed subset of $[-1,1]$.

We define a closed $\sigma$-invariant subset $X$ of $S^\mathbb{N}$ by for any $x=(x_n)_{n\ge1}\in S^\mathbb{N}$, $x\in X$ if and only if the following properties hold
\begin{itemize}
\item $|x_1|\le|x_2|\le\cdots$,
\item for any $n\ge1$ and $k\ge1$, if $x_n=s_k$, then $x_{n+j}=-s_k$ for all $1\le j\le k$,
\item for every $n\ge1$, if $x_n=1$, then $x_{n+j}=-1$ for all $j\ge1$.
\end{itemize}

Let $f=\sigma|_X\colon X\to X$, $X_k=X\cap\{-s_k,s_k\}^\mathbb{N}$, $k\ge1$, and let
\[
X_\infty=X\cap\{-1,1\}^\mathbb{N}=\{(-1)^\infty,1(-1)^\infty\}\cup\{(-1)^m1(-1)^\infty\colon m\ge1\}.
\]
Note that $X_k$, $k\ge1$, are mixing subshifts of finite type. Let
\[
Y_k=X\cap\{-s_1,-s_2,\dots,-s_k,s_1,s_2,\dots,s_k\}^\mathbb{N},
\]
$k\ge1$. We have $Y_1\subset Y_2\subset\cdots$ and
\[
X=\overline{\bigcup_{k\ge1}Y_k}.
\]
Note that $Y_k$, $k\ge1$, are closed $f$-invariant subsets of $X$ and subshifts of finite type.

Fix a sequence $(\epsilon_k)_{k\ge1}$ of numbers with $0<\epsilon_1>\epsilon_2>\cdots$ and $\lim_{k\to\infty}\epsilon_k=0$. There are sequences $(s_k)_{k\ge1}$, $(\Delta_k)_{k\ge1}$, $(\delta_k)_{k\ge1}$ of numbers such that the following conditions are satisfied
\begin{itemize}
\item $0<s_1<s_2<\cdots$ and $\lim_{k\to\infty}s_k=1$,
\item $0<\Delta_1>\Delta_2>\cdots$ and $\lim_{k\to\infty}\Delta_k=0$,
\item $0<\delta_1>\delta_2>\cdots$ and $\lim_{k\to\infty}\delta_k=0$,
\item for every $k\ge1$, there is a continuous map $\phi_k\colon X\to Y_k$ such that
\begin{itemize}
\item $D(a,\phi_k(a))\le\Delta_k$ for all $a\in X$,
\item $\phi_k(b)=b$ for all $b\in Y_k$,
\item for any $\delta_k$-limit-pseudo orbit $\xi=(x^{(i)})_{i\ge0}$ of $f$, $\xi$ is $\epsilon_k$-limit shadowed by $x=(x_1^{(n-1)})_{n\ge1}$, and $\xi'=(y^{(i)})_{i\ge0}=(\phi_k(x^{(i)}))_{i\ge0}$ is $\epsilon_k$-limit shadowed by $y=(y_1^{(n-1)})_{n\ge1}\in Y_k$.
\end{itemize}
\end{itemize}

\begin{rem}
\normalfont
An explicit construction of $\phi_k\colon X\to Y_k$, $k\ge1$, can be found in \cite{K2}.
\end{rem}

We shall prove the following.

\begin{claim}
$f\colon X\to X$ satisfies the s-limit shadowing property.
\end{claim}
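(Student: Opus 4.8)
The plan is to reduce the s-limit shadowing of $f$ on $X$ to the finite-level subshifts $Y_k$, which are of finite type and therefore well-behaved, and to use the diagonal construction of Lemma 4.1 to pin down the asymptotic behaviour. Fix $\epsilon>0$ and choose $k$ so large that $\epsilon_k+\Delta_k\le\epsilon$; set $\delta=\delta_k$. Given a $\delta$-limit-pseudo orbit $\xi=(x^{(i)})_{i\ge0}$ of $f$, the prescribed property of $\phi_k$ produces a point $y\in Y_k\subset X$ that $\epsilon_k$-limit shadows the projected orbit $\xi'=(\phi_k(x^{(i)}))_{i\ge0}$. Combining $D(\sigma^i(y),\phi_k(x^{(i)}))\le\epsilon_k$ with $D(\phi_k(x^{(i)}),x^{(i)})\le\Delta_k$ gives the uniform estimate $D(\sigma^i(y),x^{(i)})\le\epsilon_k+\Delta_k\le\epsilon$ for all $i\ge0$, so the required uniform $\epsilon$-closeness comes for free.

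The delicate point is the limit condition $\lim_{i\to\infty}D(\sigma^i(y),x^{(i)})=0$. Shadowing of $\xi'$ only yields $D(\sigma^i(y),\phi_k(x^{(i)}))\to0$, and the residual term $D(\phi_k(x^{(i)}),x^{(i)})$ need not vanish: when the tail of $\xi$ climbs to the top level—i.e.\ when $x^{(i)}$ approaches $X_\infty$, whose symbols $\pm1$ lie outside $Y_k$—this term stabilises near $(1-s_k)/2>0$. This is the main obstacle, and it shows that no single fixed projection $\phi_k$ can by itself deliver a limit-shadowing point; the asymptotics must be matched exactly, not merely to within $\Delta_k$. I would therefore match the tail directly via the diagonal $x=(x_1^{(n-1)})_{n\ge1}$. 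By Lemma 4.1 (with $\alpha=2$) this diagonal $\delta$-limit shadows $\xi$ in the ambient space, so $D(\sigma^i(x),x^{(i)})\to0$; its only defect is that $x$ may fail to lie in $X$ on an initial block.

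The key technical step is to show that the tail of $x$ is eventually $X$-admissible. Since $\xi$ is a limit-pseudo orbit, the $n=1$ term of $D(\sigma(x^{(i)}),x^{(i+1)})$ gives $|x_2^{(i)}-x_1^{(i+1)}|\to0$, whence $|x_1^{(i+1)}|\approx|x_2^{(i)}|\ge|x_1^{(i)}|$; thus the first coordinates are asymptotically nondecreasing in modulus, and—because $S$ is discrete away from $\pm1$—the sequence $(x_1^{(i)})_{i\ge N}$ eventually satisfies the defining constraints of $X$ (settling into a finite-level pattern or entering $X_\infty$). Granting this, I would fix such an $N$ and glue: retain the finite initial block of the $Y_k$-shadowing point $y$ (securing the uniform bound on $[0,N]$) and continue with the admissible tail $\sigma^N(x)$ (securing the limit $0$). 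Since $X=\overline{\bigcup_kY_k}$ is a subshift, this concatenation defines a genuine $z\in X$ that $\epsilon$-limit shadows $\xi$, as desired. The hardest part is exactly the tail-admissibility claim together with the gluing at the interface: one must verify that the monotonicity and the pattern rules of $X$ hold \emph{exactly} from some index on, and that splicing the finite $Y_k$-block onto the exact tail neither violates the finite-type constraints of $X$ nor breaks the uniform $\epsilon$-bound at the junction.
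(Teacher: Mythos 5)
Your setup (choose $k$ with $\epsilon_k+\Delta_k\le\epsilon$, use $\phi_k$ and the $Y_k$-shadowing point $y$ for the uniform bound) and your diagnosis of where the difficulty lies both match the paper. The gap is in your key technical step: the claim that the tail of the diagonal $x=(x_1^{(n-1)})_{n\ge1}$ is eventually $X$-admissible is \emph{false}, and it fails exactly in the case you flagged, namely when the pseudo-orbit accumulates on $X_\infty$. Concretely, take the $\delta_k$-limit-pseudo orbit that follows the true orbit of $(-1)^{m_1}1(-1)^\infty$ until it reaches $1(-1)^\infty$, then jumps (with error $2^{-m_2}$) to $(-1)^{m_2}1(-1)^\infty$ instead of $(-1)^\infty$, follows its orbit, jumps to $(-1)^{m_3}1(-1)^\infty$, and so on, with $m_l\to\infty$ rapidly. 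This is a genuine $\delta_k$-limit-pseudo orbit of $f$ lying entirely in $X_\infty\subset X$, but its diagonal is $(-1)^{m_1}1(-1)^{m_2}1(-1)^{m_3}1\cdots$, which contains the symbol $1$ infinitely often; hence \emph{every} tail of it violates the rule that after a $1$ all subsequent symbols are $-1$, and no $N$ as in your argument exists. Your discreteness heuristic breaks down precisely here: $S$ accumulates at $\pm1$, and the constraint ``$x_n=1$ implies $x_{n+j}=-1$ for all $j\ge1$'' is not stable under small perturbations, so ``asymptotically nondecreasing moduli'' cannot be upgraded to exact admissibility of the tail.

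This is why the paper's proof proceeds by cases according to which chain component $X_l$, $l\in\{1,2,\dots\}\cup\{\infty\}$, attracts the tail of $\xi$. For $l\le k$ the point $y$ alone suffices (there the projection error $D(x^{(i)},\phi_k(x^{(i)}))$ does vanish). For $k<l<\infty$ a gluing essentially like yours works, because the symbols $\pm s_l$ are isolated in $S$ and $X_l$ is a subshift of finite type, so the diagonal of the projection of $\xi$ to $X_l$ has admissible tails. But for $l=\infty$ no tail of the diagonal can be used; instead the paper \emph{replaces} the limiting $\pm1$-pattern by symbols $w_{N+i}\in\{s_j\colon j\ge k+1\}$. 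Since the gaps between successive occurrences of $1$ in the projected diagonal tend to infinity, one can choose indices $j\to\infty$ so that the forced blocks of $-s_j$ fit between the occurrences and the moduli stay nondecreasing; this yields a point of $X$ whose orbit agrees with the diagonal only approximately, with error at most $2\epsilon_k$ tending to $0$, which is exactly what limit shadowing requires. Without this substitution step — matching the tail by nearby admissible symbols rather than exactly — your construction cannot be completed, so the proposal as written has a genuine gap.
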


Let $k\ge1$ and let $\xi=(x^{(i)})_{i\ge0}$ be a $\delta_k$-limit-pseudo orbit of $f$ as above. We have
\[
\lim_{i\to\infty}D(x^{(i)},X_l)=0
\]
for some $l\in\{1,2,\dots\}\cup\{\infty\}$.

(1) If $1\le l\le k$, then $X_l\subset Y_k$. By
\[
D(f^i(y),x^{(i)})\le D(f^i(y),y^{(i)})+D(x^{(i)},y^{(i)})=D(f^i(y),y^{(i)})+D(x^{(i)},\phi_k(x^{(i)})),
\]
$i\ge0$, we obtain $D(f^i(y),x^{(i)})\le\epsilon_k+\Delta_k$ for all $i\ge0$, and
\[
\lim_{i\to\infty}D(f^i(y),x^{(i)})=0,
\]
thus $\xi$ is $(\epsilon_k+\Delta_k)$-limit shadowed by $y$.

(2) If $k<l<\infty$, then by taking $z^{(i)}\in X_l$, $i\ge0$, with $D(x^{(i)},z^{(i)})=D(x^{(i)},X_l)$ for all $i\ge0$, we obtain
\[
\lim_{i\to\infty}D(x^{(i)},z^{(i)})=0.
\]
Note that $(z^{(i)})_{i\ge0}$ is a limit-pseudo orbit of $\sigma|_{X_l}$ and so limit shadowed by
\[
(z_1^{(n-1)})_{n\ge1}\in\{-s_l,s_l\}^\mathbb{N}
\]
(with respect to $\sigma$). Since $X_l$ is a subshift of finite type, we have $(z_1^{(N+n-1)})_{n\ge1}\in X_l$ for all sufficiently large $N\ge0$. There is $N\ge0$ such that the following conditions are satisfied
\begin{itemize}
\item $(z_1^{(N+n-1)})_{n\ge1}\in X_l$,
\item $D(x^{(N+i)},z^{(N+i)})\le\epsilon_k$ and so $|x_1^{(N+i)}-z_1^{(N+i)}|\le2\epsilon_k$ for all $i\ge0$,
\item
\[
w=(y_1^{(0)},y_1^{(1)},\dots,y_1^{(N-1)},z_1^{(N)},z_1^{(N+1)},z_1^{(N+2)},\dots)\in X.
\]
\end{itemize}
Note that
\[
x=(x_1^{(0)},x_1^{(1)},\dots,x_1^{(N-1)},x_1^{(N)},x_1^{(N+1)},x_1^{(N+2)},\dots).
\]
By
\[
|x_1^{(i)}-y_1^{(i)}|\le2D(x^{(i)},y^{(i)})=2D(x^{(i)},\phi_k(x^{(i)}))\le2\Delta_k,
\]
$0\le i\le N-1$, we obtain $D(\sigma^i(x),\sigma^i(w))\le\max\{\Delta_k,\epsilon_k\}$ for all $i\ge0$.
By
\[
\lim_{i\to\infty}D(x^{(N+i)},z^{(N+i)})=0
\]
and so
\[
\lim_{i\to\infty}|x_1^{(N+i)}-z_1^{(N+i)}|=0,
\]
we obtain
\[
\lim_{i\to\infty}D(\sigma^i(x),\sigma^i(w))=0.
\]
Since $\xi$ is $\epsilon_k$-limit shadowed by $x$, $\xi$ is $\max\{\epsilon_k+\Delta_k,2\epsilon_k\}$-limit shadowed by $w$.

(3) If $l=\infty$, then by taking $z^{(i)}\in X_\infty$, $i\ge0$, with $D(x^{(i)},z^{(i)})=D(x^{(i)},X_\infty)$ for all $i\ge0$, we obtain
\[
\lim_{i\to\infty}D(x^{(i)},z^{(i)})=0.
\]
Note that $(z^{(i)})_{i\ge0}$ is a limit-pseudo orbit of $\sigma|_{X_\infty}$ and so limit shadowed by $z=(z_1^{(n-1)})_{n\ge1}\in\{-1,1\}^\mathbb{N}$ (with respect to $\sigma$). Since
\[
\lim_{i\to\infty}D(\sigma^i(z),X_\infty)=\lim_{i\to\infty}D(\sigma^i(z),z^{(i)})=0,
\]
we have $|\{n\ge1\colon z_1^{(n-1)}=1\}|<\infty$ or, letting $\{n\ge1\colon z_1^{(n-1)}=1\}=\{n_l\colon l\ge1\}$ and $1\le n_1<n_2<\cdots$,
\[
\lim_{l\to\infty}(n_{l+1}-n_l)=\infty.
\]
There are $N\ge0$ and $w_{N+i}\in\{s_j\colon j\ge k+1\}$, $i\ge0$, such that the following conditions are satisfied
\begin{itemize}
\item $D(x^{(N+i)},z^{(N+i)})\le\epsilon_k$ and so $|x_1^{(N+i)}-z_1^{(N+i)}|\le2\epsilon_k$ for all $i\ge0$,
\item
\[
w=(y_1^{(0)},y_1^{(1)},\dots,y_1^{(N-1)},w_N,w_{N+1},w_{N+2},\dots)\in X,
\]
\item $|z_1^{(N+i)}-w_{N+i}|\le2\epsilon_k$ for all $i\ge0$, and
\[
\lim_{i\to\infty}|z_1^{(N+i)}-w_{N+i}|=0.
\]
\end{itemize}
Note that
\[
x=(x_1^{(0)},x_1^{(1)},\dots,x_1^{(N-1)},x_1^{(N)},x_1^{(N+1)},x_1^{(N+2)},\dots).
\]
By
\[
|x_1^{(i)}-y_1^{(i)}|\le2D(x^{(i)},y^{(i)})=2D(x^{(i)},\phi_k(x^{(i)}))\le2\Delta_k,
\]
$0\le i\le N-1$, and
\[
|x_1^{(N+i)}-w_{N+i}|\le|x_1^{(N+i)}-z_1^{(N+i)}|+|z_1^{(N+i)}-w_{N+i}|\le4\epsilon_k,
\]
$i\ge0$, we obtain $D(\sigma^i(x),\sigma^i(w))\le\max\{\Delta_k,2\epsilon_k\}$ for all $i\ge0$.
By
\[
\lim_{i\to\infty}D(x^{(N+i)},z^{(N+i)})=0
\]
and so
\[
\lim_{i\to\infty}|x_1^{(N+i)}-z_1^{(N+i)}|=0,
\]
we obtain
\[
\lim_{i\to\infty}|x_1^{(N+i)}-w_{N+i}|=0,
\]
thus
\[
\lim_{i\to\infty}D(\sigma^i(x),\sigma^i(w))=0.
\]
Since $\xi$ is $\epsilon_k$-limit shadowed by $x$, $\xi$ is $\max\{\epsilon_k+\Delta_k,3\epsilon_k\}$-limit shadowed by $w$.

Since $k\ge1$ and $\xi$ are arbitrary, we conclude that $f$ satisfies the s-limit shadowing property. 

\begin{rem}
\normalfont
Given any continuous map $f\colon X\to X$, we say that a closed $f$-invariant subset $S$ of $X$ is {\em chain stable} if for any $\epsilon>0$, there is $\delta>0$ such that every $\delta$-chain $(x_i)_{i=0}^k$ of $f$ with $x_0\in S$ satisfies $d(x_i,S)\le\epsilon$ for all $0\le i\le k$. Following \cite{AHK}, we say that $C\in\mathcal{C}(f)$ is {\em terminal} if $C$ is chain stable. We denote by $\mathcal{C}_{\rm ter}(f)$ the set of terminal chain components for $f$. In \cite{K5}, it is shown that if $f$ has the shadowing property, then the union of the basins of terminal chain components for $f$, i.e.,
\[
V(f)=\bigsqcup_{C\in\mathcal{C}_{\rm ter}(f)} W^s(C)
\]
is a dense $G_\delta$-subset of $X$.

As shown in \cite{K2}, the above $f\colon X\to X$ satisfies
\[
\mathcal{C}(f)=\{X_k\colon k\ge1\}\cup\{X_\infty\}.
\]
We easily see that $\mathcal{C}_{\rm ter}(f)=\{X_\infty\}$. Since $f$ has the s-limit shadowing property and so the shadowing property, it follows that
\[
V(f)=W^s(X_\infty)
\]
is a dense $G_\delta$-subset of $X$.
\end{rem}

\begin{rem}
\normalfont
For any continuous map $f\colon X\to X$, let
\[
\mathcal{C}_{\rm sp}(f)=\{C\in\mathcal{C}(f)\colon\text{$f|_C\colon C\to C$ has the shadowing property}\}.
\]
We regard the quotient space
\[
\mathcal{C}(f)=CR(f)/{\leftrightarrow}
\]
as a space of chain components for $f$. In \cite{K3}, it is shown that if $f$ has the s-limit shadowing property, then
\[
\mathcal{C}(f)=\overline{\mathcal{C}_{\rm sp}(f)}.
\]
The above $f\colon X\to X$ satisfies $\mathcal{C}_{\rm sp}(f)=\{X_k\colon k\ge1\}$ and $\mathcal{C}(f)\setminus\mathcal{C}_{\rm sp}(f)=\{X_\infty\}$. 
\end{rem}

\begin{rem}
\normalfont
For every Furstenberg family $\mathcal{F}$, we define its dual family $\mathcal{F}^\ast$ by
\[
\mathcal{F}^\ast=\{A\subset\mathbb{N}_0\colon A\cap B\ne\emptyset\:\:\text{for all $B\in\mathcal{F}$}\},
\]
which is also a Furstenberg family. For integers $p\ge0$ and $m\ge1$, let
\[
\langle p,m\rangle=\{p+qm\colon q\ge0\}=\{p, p+m,p+2m,\dots\}.
\]
We define a Furstenberg family $\mathcal{F}_{\rm iap}$ by
\[
\mathcal{F}_{\rm iap}=\{A\subset\mathbb{N}_0\colon\langle p,m\rangle\subset A\:\:\text{for some $p\ge0$ and some $m\ge1$}\}.
\]
For any continuous map $g\colon Y\to Y$, where $Y$ is a compact metric space, and any open subset $U$ of $Y$, we have
\[
\{y\in Y\colon\{i\in\mathbb{N}_0\colon g^i(y)\in U\}\in\mathcal{F}_{\rm iap}^\ast\}=\bigcap_{p\ge0}\bigcap_{m\ge1}\bigcup_{q\ge 0}\{y\in Y\colon g^{p+qm}(y)\in U\},
\]
which is a $G_\delta$-subset of $Y$. It follows that $\mathcal{F}_{\rm iap}^\ast$ is compatible with all $(Y,g)$. We easily see that $\mathcal{F}_{\rm iap}^\ast$ is full and translation invariant.

Since $(-1)^\infty\in X_\infty$ is a fixed point for $f$, the above $f\colon X\to X$ satisfies
\[
\mathcal{D}(X_\infty)=\{X_\infty\}
\]
and so
\[
W^s(X_\infty)=V^s(X_\infty).
\]
Since
\[
\Delta_n=\inf_{D\in\mathcal{D}(X_\infty)}\sup_{x_1,x_2,\dots,x_n\in D}\min_{1\le j<k\le n}d(x_j,x_k)=\sup_{x_1,x_2,\dots,x_n\in X_\infty}\min_{1\le j<k\le n}d(x_j,x_k)>0,
\]
$n\ge2$, by Theorem 1.2 of \cite{K4}, we see that
\[
\{(x_1,x_2,\dots,x_n)\in[V^s(X_\infty)]^n\colon S_f(x_1,x_2,\dots,x_n;\delta_n)\in\mathcal{F}_{\rm iap}^\ast\}
\]
is non-empty for all $n\ge2$ for some $\delta_n>0$. Let $\mathcal{G}$ be a full Furstenberg family compatible with
\[
(X^n,f^{\times n})
\]
for all $n\ge2$. By Theorem 1.4 in Section 1, we conclude that $V^s(X_\infty)$ is generic $(\mathcal{F}_{\rm iap}^\ast,\mathcal{G})$-$n$-$r_n$-chaotic for $f$ for all $n\ge2$ and $0<r_n<\delta_n$.
\end{rem}


\begin{thebibliography}{99}

\bibitem{A} E.\:Akin, The general topology of dynamical systems. Graduate Studies in Mathematics, 1. American Mathematical Society, Providence, R.I., 1993.

\bibitem{AHK} E.\:Akin, M.\:Hurley, J.\:Kennedy, Dynamics of topologically generic homeomorphisms. Mem. Amer. Math. Soc. 164 (2003).

\bibitem{BGO} A.D.\:Barwell, C.\:Good, P.\:Oprocha, Shadowing and expansivity in subspaces. Fund. Math. 219 (2012), 223--243.

\bibitem{BGKM} F.\:Blanchard, E.\:Glasner, S.\:Kolyada, A.\:Maass, On Li--Yorke pairs. J. Reine Angew. Math. 547 (2002), 51--68.

\bibitem{BCOT} J.\:Bobok, J.\:\v{C}in\v{c}, P.\:Oprocha, S.\:Troubetzkoy, S-limit shadowing is generic for continuous Lebesgue measure-preserving circle maps, Ergodic Theory Dynam. Systems 43 (2023), 78--98.

\bibitem{B} R.\:Bowen, $\omega$-limit sets for axiom A diffeomorphisms. J. Differ. Equations 18 (1975), 333--339.

\bibitem{BMR} W.R.\:Brian, J.\:Meddaugh, B.E.\:Raines, Chain transitivity and variations of the shadowing property. Ergodic Theory Dynam. Systems 35 (2015), 2044--2052.

\bibitem{C} C.\:Conley, Isolated invariant sets and the Morse index. CBMS Regional Conference Series in Mathematics, 38. American Mathematical Society, Providence, R.I., 1978.

\bibitem{H} M.\:Hurley, Lyapunov functions and attractors in arbitrary metric spaces. Proc. Amer. Math. Soc. 126 (1998), 245--256.

\bibitem{K1} N.\:Kawaguchi, A type of shadowing and distributional chaos. Dyn. Syst. 36 (2021), 572--585.

\bibitem{K2} N.\:Kawaguchi, Generic and dense distributional chaos with shadowing. J. Difference Equ. Appl. 27 (2021), 1456--1481.

\bibitem{K3} N.\:Kawaguchi, S-limit shadowing implies the denseness of chain components with the shadowing property. Topology Appl. 342 (2024), 108763.

\bibitem{K4} N.\:Kawaguchi, S-limit shadowing and a global description of Li--Yorke type chaos. arXiv: 2308.11163. 

\bibitem{K5} N.\:Kawaguchi, Shadowing and the basins of terminal chain components. arXiv: 2402.15129.

\bibitem{LY1} J.\:Li, X.\:Ye, Recent development of chaos theory in topological dynamics. Acta Math. Sin., Engl. Ser. 32 (2016), 83--114.

\bibitem{LY2} T.Y.\:Li, J.A.\:Yorke, Period three implies chaos. Amer. Math. Monthly 82 (1975), 985--992.

\bibitem{MO} M.\:Mazur, P.\:Oprocha, S-limit shadowing is $C^0$-dense. J. Math. Anal. Appl. 408 (2013), 465--475.

\bibitem{M} J.\:Mycielski, Independent sets in topological algebras. Fund. Math. 55 (1964), 139--147.

\bibitem{RW} D.\:Richeson, J.\:Wiseman, Chain recurrence rates and topological entropy. Topology Appl. 156 (2008), 251--261.

\bibitem{Sa} K.\:Sakai, Various shadowing properties for positively expansive maps. Topology Appl. 131 (2003), 15--31.

\bibitem{Shi} T.\:Shimomura, On a structure of discrete dynamical systems from the view point of chain components and some applications. Japan. J. Math., New Ser. 15 (1989), 99--126.

\bibitem{TX} F.\:Tan, J.\:Xiong, Chaos via Furstenberg family couple. Topology Appl. 156 (2009), 525--532.

\bibitem{W} S.\:Willard, General topology. Dover Publications, Inc., 2004.

\bibitem{X} J.\:Xiong, Chaos in a topologically transitive system. Sci. China Ser. A, 48 (2005), 929--939.

\bibitem{XLT} J.\:Xiong, J.\:L\"{u}, F.\:Tan, Furstenberg family and chaos. Sci. China Ser. A 50 (2007), 1325--1333.

\end{thebibliography}
\end{document}